%------------------------------------------------------------------------------
% Beginning of journal.tex
%------------------------------------------------------------------------------
%
% AMS-LaTeX version 2 sample file for journals, based on amsart.cls.
%
%        ***     DO NOT USE THIS FILE AS A STARTER.      ***
%        ***  USE THE JOURNAL-SPECIFIC *.TEMPLATE FILE.  ***
%
% Replace amsart by the documentclass for the target journal, e.g., tran-l.
%
\documentclass{amsart}

%     If your article includes graphics, uncomment this command.
\usepackage{graphicx}
\usepackage{amssymb}
\usepackage{float}

\newtheorem{theorem}{Theorem}[section]
\newtheorem{lemma}[theorem]{Lemma}
\newtheorem{corollary}{Corollary}[theorem]
\newtheorem{proposition}[theorem]{Proposition}

\theoremstyle{definition}

\theoremstyle{remark}
\newtheorem{remark}[theorem]{Remark}

\numberwithin{equation}{section}

%    Absolute value notation

\newcommand{\PP}{\mathcal{P}}

\newcommand{\Q}{{\mathbb Q}}

\newcommand{\N}{{\mathbb N}}

\newcommand{\li}{\textnormal{li}}

\newcommand{\QQ}{{\mathcal Q}}

%    Blank box placeholder for figures (to avoid requiring any
%    particular graphics capabilities for printing this document).

\begin{document}

\title[The Erd\H os conjecture for primitive sets]{The Erd\H os conjecture for primitive sets}

%    Information for first author
\author{Jared Duker Lichtman}
%    Address of record for the research reported here
\address{Department of Mathematics, Dartmouth College, Hanover, NH 03755}

\email{jdl.18@dartmouth.edu}
\email{jared.d.lichtman@gmail.com}
%    \thanks will become a 1st page footnote.
%\thanks{The first author was supported in part by NSF Grant \#000000.}

%    Information for second author
\author{Carl Pomerance}
\address{Department of Mathematics, Dartmouth College, Hanover, NH 03755}
\email{carl.pomerance@dartmouth.edu}
%\thanks{Support information for the second author.}

%    General info
\subjclass[2010]{Primary 11B83; Secondary 11A05, 11N05}

%\date{\today}
\date{June 30, 2018.}

%\dedicatory{This paper is dedicated to}

\keywords{primitive set, primitive sequence, Mertens' product formula}

\begin{abstract}
A subset of the integers larger than 1 is {\it primitive} if no member divides another.
Erd\H os proved in 1935 that the sum of $1/(a\log a)$ for $a$ running over a primitive set $A$
is universally bounded over all choices for $A$.  In 1988 he asked if this universal bound is
attained for the set of prime numbers.  In this paper we make some progress on several fronts,
and show a connection to certain prime number ``races" such as the race between $\pi(x)$
and $\li(x)$.
\end{abstract}

\maketitle

%% The correct journal style for \specialsection is all uppercase; a known bug
%% in amsart.cls prevents this, so input must be uppercase until it is fixed.

\section{Introduction}

A set of positive integers $>1$ is called {\bf primitive} if no element divides any other (for convenience, we exclude the singleton set $\{1\}$).   There are
a number of interesting and sometimes unexpected theorems about primitive sets.
After Besicovitch \cite{besicovitch}, we know that the upper asymptotic density of a primitive set can be arbitrarily close to $1/2$, whereas the lower asymptotic density is always $0$.  Using the fact that
if a primitive set has a finite reciprocal sum, then 
the set of multiples of members of the set has an asymptotic density, Erd\H os gave an elementary proof that the
set of nondeficient numbers (i.e., $\sigma(n)/n\ge2$, where $\sigma$ is the sum-of-divisors
function) has an asymptotic density.  Though the reciprocal sum of a primitive set can
possibly diverge, Erd\H os \cite{erdos35} showed that for a primitive set $A$,
$$
\sum_{a\in A}\frac1{a\log a}<\infty.
$$
In fact, the proof shows that these sums are uniformly bounded as $A$ varies over
primitive sets.

Some years later in a 1988 seminar in Limoges, Erd\H os suggested that in fact we always have
\begin{equation}
\label{eq:conj}
f(A):=\sum_{a\in A}\frac1{a\log a}\le\sum_{p\in\PP}\frac1{p\log p},
\end{equation}
where $\PP$ is the set of prime numbers.  The assertion \eqref{eq:conj} is now known
as the Erd\H os conjecture for primitive sets.  

In 1991, Zhang \cite{zhang1} proved the
Erd\H os conjecture for primitive sets $A$ with no member having more than 4 prime factors
(counted with multiplicity).

After Cohen \cite{cohen}, we have
\begin{equation}
\label{eq:cohen}
    C: = \sum_{p\in \PP}\frac1{p\log p} = 1.63661632336\ldots\,,
\end{equation}
the sum over primes in \eqref{eq:conj}.  
Using the original Erd\H os argument in \cite{erdos35}, Erd\H os and Zhang showed that
$f(A)<2.886$ for a primitive set $A$, which was later improved by Robin to $2.77$.  These unpublished
estimates are reported in Erd\H os--Zhang \cite{ez} who used another method to show that
 $f(A)<1.84$.   Shortly after, Clark \cite{clark} claimed that 
$f(A)\le e^\gamma=1.781072\dots$\,.  However, his brief argument appears to be
incomplete.  
%In personal correspondence, Clark has kindly attempted to clarify and expand
%the published argument, but there still seems to be a gap.

Our principal results are the following.
\begin{theorem}\label{thm:egamma}
For any primitive set $A$ we have $f(A) < e^\gamma$.
\end{theorem}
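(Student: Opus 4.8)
The plan is to make Erd\H os's 1935 argument \cite{erdos35} quantitative. For $a\in A$ write $p=P(a)$ for the largest prime factor of $a$ and attach to $a$ the set $S_a=\{ab: b\in\N,\ \gcd(b,\prod_{q<p}q)=1\}$. A bookkeeping argument with unique factorization, using that $A$ is primitive, shows the $S_a$ $(a\in A)$ are pairwise disjoint subsets of $\N$; since $S_a$ has natural density $\tfrac1a\prod_{q<P(a)}(1-1/q)$, summing over $a$ yields the one structural input
\[
\sum_{a\in A}\frac1a\prod_{q<P(a)}\Bigl(1-\frac1q\Bigr)\ \le\ 1 .
\]
Everything after this is an analytic comparison of $1/(a\log a)$ with the weight $\tfrac1a\prod_{q<P(a)}(1-1/q)$, calibrated so that the constant which emerges is exactly $e^{\gamma}$.

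First I would dispose of the non-prime elements of $A$. If $a\in A$ is composite then $a\ge 2P(a)$, so $\log a\ge \log 2+\log P(a)$; combined with an explicit form of Mertens' third theorem (the Rosser--Schoenfeld bounds for $P(a)$ large, together with a finite check for the small primes) this gives $e^{\gamma}(\log 2+\log p)\prod_{q<p}(1-1/q)>1$, and hence
\[
\frac1{a\log a}\ <\ e^{\gamma}\cdot\frac1a\prod_{q<P(a)}\Bigl(1-\frac1q\Bigr)\qquad(a\in A\text{ composite}).
\]
Summing this over the composite elements and feeding in the density inequality bounds the composite part of $f(A)$ by $e^{\gamma}\bigl(1-\sum_{p\in A\cap\PP}\tfrac1p\prod_{q<p}(1-1/q)\bigr)$.

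The delicate step — and the one I expect to be the main obstacle — is the prime elements themselves: if $p\in A$ then $A$ contains no other multiple of $p$, so the ``$p$-block'' is just $\{p\}$, contributing the bare $1/(p\log p)$. For the small odd primes the comparison above runs \emph{backwards}, $1/(p\log p)>e^{\gamma}\cdot\tfrac1p\prod_{q<p}(1-1/q)$ — this is precisely where the oscillation in Mertens' product, the prime ``race'', intervenes — so one cannot argue termwise, and a crude combination of the two displays above can overshoot $e^{\gamma}$. To repair this I would exploit the very fact that $p\in A$ kills every multiple of $p$: on one side this evicts a set of positive density (roughly $\tfrac1p\bigl(1-\prod_{q<p}(1-1/q)-\sum_{a\in A,\,P(a)<p}\tfrac1a\prod_{q<P(a)}(1-1/q)\bigr)$ of the multiples of $p$) from $\bigcup_a S_a$, so the density inequality can be sharpened; on the other side it forces the composite part $C_A$ to consist of integers coprime to $p$, so the estimate of the second paragraph is far from sharp whenever small primes lie in $A$. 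Quantifying the first effect and balancing it against the excess $1/(p\log p)-e^{\gamma}\tfrac1p\prod_{q<p}(1-1/q)$, helped by a short case split over $A\cap\{2,3,5,\dots\}$ (the only primes exhibiting this defect), should give $f(A)\le e^{\gamma}$; the strict inequality then follows at once, for if $A$ contains a composite the comparison in the second paragraph is strict, while if $A\subseteq\PP$ then trivially $f(A)\le f(\PP)=C<e^{\gamma}$ by \eqref{eq:cohen}.
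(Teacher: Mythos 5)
Your setup is sound and overlaps with the paper's: the density inequality $\sum_{a\in A} g(a) \le 1$ with $g(a)=\tfrac1a\prod_{q<P(a)}(1-1/q)$, and the termwise comparison $f(a) < e^\gamma g(a)$ for composite $a$ via explicit Mertens, are both correct and are essentially the content of Proposition~\ref{lem:erdos}. You also correctly locate the obstacle. But without the repair your argument gives only
\[
f(A) \;<\; e^\gamma + \sum_{p\in A\cap\PP}\bigl(f(p) - e^\gamma g(p)\bigr),
\]
and this correction term is \emph{not} always $\le 0$: taking $A$ to contain all odd primes, $\sum_{p>2}f(p)=C-1/(2\log 2)\approx 0.915$ while $e^\gamma\sum_{p>2}g(p)=e^\gamma/2\approx 0.890$, so the bound reads $f(A)<e^\gamma+0.025$, which is no bound at all. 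Hence the whole theorem rests on the ``repair'' sketched in your last paragraph, and that paragraph is a heuristic, not a proof: the eviction densities you write down overcount (an integer divisible by two primes of $A$ is counted twice), and the parenthetical ``the only primes exhibiting this defect'' is mistaken — by definition every Mertens prime $p$ satisfies $f(p)\ge e^\gamma g(p)$, and the paper verifies this for \emph{all} odd primes up to $p_{10^8}$, so your ``short case split over $A\cap\{2,3,5,\dots\}$'' is in fact a split over almost all primes, with no quantitative balance carried out. This is precisely the kind of gap the paper attributes to Clark's incomplete argument.

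The paper's proof avoids this by a different decomposition. It partitions $A$ by \emph{smallest} prime factor, $A=\bigcup_q A'_q$, and Proposition~\ref{lem:erdos} gives the local bound $f(A'_q)<e^\gamma g(q)$ when $q\notin A$. The new ingredient is Lemma~\ref{lem:egamma}: the block $A'_2$ is sliced further by $2$-adic valuation, and Proposition~\ref{lem:erdos} is applied again to each deflated slice $B^k$, yielding $f(A'_2)<e^\gamma\sum_{p\notin A,\,p>2}g(p)$. The odd-prime sum therefore appears \emph{twice} — once from the odd blocks, once from the even block — and the conclusion is $f(A)<2e^\gamma\sum_{p>2}g(p)=e^\gamma$ via the identity~\eqref{eq:ident} and the uniform estimate $f(p)<1.082\,e^\gamma g(p)$ of~\eqref{eq:fgineq2}. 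That factor of $2$ of slack is exactly what absorbs the positive excess $f(p)-e^\gamma g(p)$ at odd primes, and nothing in your sketch produces it.
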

\begin{theorem}
\label{thm:no8s}
For any primitive set $A$ with no element divisible by $8$, we have $f(A)<C+2.37\times10^{-7}$.
\end{theorem}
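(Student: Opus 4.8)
The plan is to compare $f(A)$ with $C$ prime by prime. For an integer $a>1$ let $P(a)$ denote its largest prime factor, group $A$ by the value of $P(a)$, and write $f(A)=\sum_p f_p(A)$ with $f_p(A)=\sum_{a\in A,\,P(a)=p}\tfrac1{a\log a}$. Since $C=\sum_p\tfrac1{p\log p}$, it suffices to bound $\sum_p\bigl(f_p(A)-\tfrac1{p\log p}\bigr)$. First note that if $p\in A$ then primitivity forbids any other $a\in A$ with $P(a)=p$, so $f_p(A)=\tfrac1{p\log p}$ exactly; hence only primes $p\notin A$ can contribute, and for such $p$ the members $a\in A$ with $P(a)=p$ are exactly the numbers $pb$ with $b$ running over a primitive set $B_p$ of $p$-smooth integers, each $\ge 2$.

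The engine is a sharpened form of Erd\H os's original sieve argument. For $a\in A$ with $P(a)=p$ put $\varepsilon_p=\prod_{q\le p}(1-\tfrac1q)$ and $g(a)=\tfrac1a\varepsilon_p$, and let $N_a$ be the set of multiples $am$ of $a$ with $m$ coprime to every prime $\le p$. When $A$ is primitive the sets $N_a$ are pairwise disjoint, and $N_a$ has density $g(a)$, so $\sum_{a\in A}g(a)\le 1$. Here the hypothesis enters decisively: every such $m$ is odd and every $a\in A$ satisfies $v_2(a)\le 2$ because $8\nmid a$, so each $N_a$ lies inside the set of integers not divisible by $8$, which has density $\tfrac78$; therefore $\sum_{a\in A}g(a)\le\tfrac78$. (Equivalently, partition $A$ by $v_2(a)\in\{0,1,2\}$ into three primitive sets of odd integers and apply the density-$\tfrac12$ bound to each.) To return from $g$ to $\tfrac1{a\log a}$ I would use an effective form of Mertens' product formula: $\tfrac1{a\log a}\le\tfrac1{a\log P(a)}=c_{P(a)}\,g(a)$ where $c_p:=\tfrac1{\varepsilon_p\log p}$, and $c_p$ decreases to $e^\gamma$ as $p\to\infty$, the excess $c_p-e^\gamma$ being appreciable only for the smallest primes.

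Combining these gives $f(A)\le\sum_{p\le p_0}f_p(A)+\bigl(\sup_{p>p_0}c_p\bigr)\bigl(\tfrac78-\sum_{a\in A,\,P(a)\le p_0}g(a)\bigr)$ for a suitable threshold $p_0$, and the proof would be completed in two parts. First, one carries out a finite computation showing $f_p(A)\le\tfrac1{p\log p}$ for each prime $p\le p_0$: here $B_p$ ranges over primitive families of $p$-smooth integers with bounded $2$-part, a situation small enough to dispose of directly, or by invoking Zhang's theorem \cite{zhang1} where it applies. Second, one tracks the interplay between the sieve ``budget" $\sum_{a,\,P(a)\le p_0}g(a)$ consumed by small-prime-factor elements and the contribution $\sum_{p\le p_0}f_p(A)$ those same elements make --- the larger one is forced, the larger the other --- so that the extra slack $\tfrac78$ (rather than $1$) produced by the hypothesis is exactly what pushes the total below $C$. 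The main obstacle is the uniform estimate for $p>p_0$: one needs $c_p=\tfrac1{\varepsilon_p\log p}$ to be close enough to $e^\gamma$ for \emph{all} $p>p_0$, that is, a sufficiently strong explicit lower bound for $\prod_{q\le x}(1-\tfrac1q)$ valid throughout $x>p_0$; the sharpest such bounds are limited precisely by what is known about the race between $\pi(x)$ and $\li(x)$ (equivalently, the sign of $\theta(x)-x$), and the part of the argument that cannot be made unconditional by current knowledge is what forces the small residual $2.37\times10^{-7}$ in place of the clean bound $f(A)\le C$.
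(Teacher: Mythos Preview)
Your decomposition is by \emph{largest} prime factor $P(a)$, and the step on which the whole argument rests --- that $f_p(A)\le 1/(p\log p)$ can be established for each prime $p\le p_0$ by ``a finite computation'' --- is not only unjustified, it is false. For a large prime $p$ take $A=\{qp:q\text{ prime},\ q<p\}$; this is a primitive set (satisfying $8\nmid a$) with $P(a)=p$ for every member, and
\[
f_p(A)=\frac{1}{p}\sum_{q<p}\frac{1}{q\log(qp)}\ \sim\ \frac{1}{p\log p}\sum_{q<p}\frac{1}{q}\ \sim\ \frac{\log\log p}{p\log p}.
\]
Already at $p=101$ one computes $f_p(A)\approx 0.00283>0.00215\approx 1/(p\log p)$. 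So the prime-by-prime inequality you need fails as soon as $p$ is past the first few primes, and there is no threshold $p_0$ at which you can switch over: for the tail bound you need $p_0$ large enough that $c_p=1/(\varepsilon_p\log p)$ is within $10^{-7}$ or so of $e^\gamma$, but for any such $p_0$ the head inequality is false for most $p\le p_0$. The ``budget'' device does not rescue this, since in the example above no sieve mass at all is spent on primes below $p$. Your $7/8$ observation is correct but, with $c_2\approx 2.885$ and $c_3\approx 2.73$, it cannot by itself bring the bound near $C$. (Also, $c_p$ does not decrease to $e^\gamma$: its sign relative to $e^\gamma$ oscillates, which is exactly the Mertens race.)

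The paper's proof groups by the \emph{smallest} prime factor instead. For $A'_q=\{a\in A:p(a)=q\}$ the Erd\H os sieve gives the clean uniform bound $f(A'_q)<e^\gamma g(q)$ whenever $q\notin A$ (Proposition~\ref{lem:erdos}), and for every odd prime $q\le p_{10^8}$ one verifies numerically that $e^\gamma g(q)\le f(q)$ (these are the ``Mertens primes''). Summing over $q$ gives $f(A)\le f(\PP(A))+\epsilon_0$ for any \emph{odd} primitive set, where $\epsilon_0<2.37\times10^{-7}$ bounds the total possible excess $e^\gamma g(q)-f(q)$ coming from non-Mertens primes beyond $p_{10^8}$. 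The hypothesis $8\nmid a$ then enters not as a global density saving but by allowing one to strip off the factor $2^j$, $j\in\{0,1,2\}$, from each element and reduce to at most three odd primitive sets; comparing $\tfrac12 f(\PP\setminus\{2\})<0.458$ with $f(2)\approx 0.721$ finishes the even cases.
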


Say a prime $p$ is
{\bf Erd\H os strong} if for any primitive set $A$ with the property that each element of $A$ has least prime
factor $p$, we have $f(A)\le 1/(p\log p)$.
We conjecture that every prime is Erd\H os strong.  Note that 
the Erd\H os conjecture \eqref{eq:conj} would immediately follow, though it is not clear that the Erd\H os conjecture implies our conjecture.  Just proving our conjecture for the case of $p=2$ would give the inequality in 
Theorem \ref{thm:no8s} for all primitive sets $A$.
Currently the best we can do for a primitive set $A$ of even numbers is that
$f(A)<e^\gamma/2$, see Proposition \ref{lem:erdos} below.

For part of the next result, we assume the Riemann hypothesis (RH) and the Linear Independence hypothesis (LI), which asserts that the sequence of numbers $\gamma_n>0$ such that $\zeta(\tfrac{1}{2}+i\gamma_n)=0$ is linearly independent over $\Q$.

\begin{theorem}
\label{thm:race}
Unconditionally, all of the odd
primes among the first $10^8$ primes are Erd\H os strong.
Assuming RH and LI, 
the Erd\H os strong primes have  relative lower logarithmic
density $>0.995$.
\end{theorem}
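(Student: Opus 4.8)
The plan is to reduce Erdős-strongness of a prime $p$ to a single Mertens-type inequality at $p$, and then handle the two assertions as (i) a finite numerical check and (ii) a prime-race density computation in the style of Rubinstein--Sarnak.

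First I would establish, along the lines of Proposition~\ref{lem:erdos}, the bound
\[
f(A)\ <\ \frac{e^{\gamma}}{p}\prod_{q<p}\Bigl(1-\frac1q\Bigr)
\qquad\text{for every primitive }A\text{ with }p(a)=p\text{ for all }a\in A,
\]
the case $p=2$ being exactly Proposition~\ref{lem:erdos}. For this, write each $a\in A$ as $a=pb$, where $b$ ranges over integers all of whose prime factors are $\ge p$; this set of $b$'s is again primitive (unless it equals $\{1\}$, i.e. $A=\{p\}$), and running the Erdős/Heilbronn--Rohrbach counting argument inside the multiplicative semigroup generated by the primes $\ge p$ yields precisely this estimate, the factor $\prod_{q<p}(1-1/q)$ being the natural density of that semigroup. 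Comparing with $1/(p\log p)$, the prime $p$ is Erdős strong as soon as
\begin{equation}\label{eq:star}
e^{\gamma}\,(\log p)\prod_{q<p}\Bigl(1-\frac1q\Bigr)\ \le\ 1 .
\end{equation}
Writing $M(x)=e^{\gamma}(\log x)\prod_{q\le x}(1-1/q)$, so that $M(x)\to 1$ by Mertens, the left side of~\eqref{eq:star} equals $M(p)\cdot p/(p-1)$, so~\eqref{eq:star} says simply $M(p)\le 1-1/p$. Note that~\eqref{eq:star} already fails at $p=2$, where its left side is $e^{\gamma}\log 2=1.234\ldots$; this is why $p=2$ is excluded below, and why Theorem~\ref{thm:no8s} requires its own argument.

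For the unconditional claim I would verify~\eqref{eq:star} directly for every odd prime up to the $10^{8}$-th, updating $\prod_{q\le p}(1-1/q)$ one prime at a time and checking $M(p)\le 1-1/p$ throughout the range. That the verification succeeds is the Mertens-product incarnation of the inequality $\pi(x)<\li(x)$ over this range: the product stays below its asymptotic value $e^{-\gamma}/\log x$ by a margin that comfortably exceeds $1/x$, the relevant race turning only far beyond $10^{9}$.

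For the conditional claim one must identify the logarithmic density of $\{p : M(p)\le 1-1/p\}$. By partial summation and the explicit formula for $\pi(x)$, $\log M(x)$ equals --- once its main term cancels --- a small negative bias plus a mean-zero oscillation indexed by the zeros of $\zeta$, the same oscillation (after normalization) that governs $\pi(x)-\li(x)$; under RH this fluctuating part has size $x^{-1/2+o(1)}$, which dominates the shift $\log(1-1/p)\sim -1/p$, so the primes satisfying~\eqref{eq:star} have the same logarithmic density as $\{x:M(x)<1\}$. Under RH and LI, the normalized error $\sqrt{x}\,(\log x)\,\log M(x)$ has a limiting distribution given by a sum of independent oscillations over the ordinates $\gamma_{n}$, biased toward the side $M(x)<1$ exactly as in the $\pi$-versus-$\li$ race; the mass it places there, hence the logarithmic density of the Erdős strong primes, exceeds $0.995$. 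The principal obstacle is twofold: first, proving the reduction bound sharply enough that the crude-looking factor $e^{\gamma}$ is not wasteful at the margin where~\eqref{eq:star} is nearly an equality; and second, transferring the Rubinstein--Sarnak density to $\log M(x)$ with a fully explicit constant, which demands control of the lower-order terms of $\log M(x)$ that are not literally a multiple of $\pi(x)-\li(x)$.
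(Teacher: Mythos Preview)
Your reduction to the Mertens-type criterion \eqref{eq:star} is exactly the paper's route: this is the notion of a \emph{Mertens prime}, and Corollary~\ref{cor:mert} is precisely your observation that \eqref{eq:star} implies Erd\H os-strongness via Proposition~\ref{lem:erdos}. The unconditional part is likewise handled by the same finite verification. So far so good.

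The gap is in the conditional half. You assert that ``the primes satisfying \eqref{eq:star} have the same logarithmic density as $\{x:M(x)<1\}$,'' but the argument you give---that the $x^{-1/2+o(1)}$ fluctuation dominates the $1/p$ shift---only shows that $\{p:M(p)\le1-1/p\}$ and $\{p:M(p)<1\}$ nearly coincide. It says nothing about passing from the \emph{continuous} logarithmic density of $\{x\in\R:M(x)<1\}$ to the \emph{relative} logarithmic density along primes, and this transfer is the actual content of the paper's argument. The paper observes that if a prime $p=p_n$ lies in $\mathcal N=\{x:M(x)>1\}$ then the whole interval $[p_n,p_{n+1})\subset\mathcal N$, so
\[
\delta(\mathcal N)\ \ge\ \limsup_{x\to\infty}\frac1{\log x}\sum_{\substack{p\le x\\p\in\QQ}}\frac{d_p}{p},
\]
where $d_p=p_{n+1}-p_n$. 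To compare this with $\bar\delta(\QQ)=\limsup(1/\log x)\sum (\log p)/p$ one must control the primes with unusually small gaps $d_p<\epsilon\log p$; the paper does this with a Riesel--Vaughan sieve bound and optimises $\epsilon$, obtaining $\bar\delta(\QQ)\le8\sqrt{\delta(\mathcal N)}<4.2\times10^{-3}$, whence the $0.995$. This square-root loss is why the theorem states $0.995$ rather than $1-3\times10^{-7}$.

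Your two ``principal obstacles'' are not the real ones. The $e^\gamma$ in the reduction is exactly tight by Mertens' theorem, so no sharpening is needed there; and the transfer of the Rubinstein--Sarnak density from $\pi(x)-\li(x)$ to $\log M(x)$ is already done by Lamzouri~\cite{lamz}, which the paper simply cites. The genuine obstacle---continuous-to-prime---is the one you skipped. (Remark~\ref{rmk:martin} indicates that under RH and LI the two densities should in fact be equal, confirming your intuition, but proving this takes more than you have written and is not needed for the stated bound.)
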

The proof depends strongly on a recent result of Lamzouri \cite{lamz} who was interested in
the ``Mertens race" between $\prod_{p\le x}(1-1/p)$ and $1/(e^\gamma \log x)$.

For a primitive set $A$, let $\PP(A)$ denote the support of $A$, i.e., the 
set of prime numbers that divide some member of $A$.  It is clear that the
Erd\H os conjecture \eqref{eq:conj} is equivalent to the same assertion where
the prime sum is over $\PP(A)$.
\begin{theorem}
\label{thm:support}
If $A$ is a primitive set with $\PP(A)\subset[3,\exp(10^6)]$, then
$$
f(A)\le\sum_{p\in\PP(A)}\frac{1}{p\log p}.
$$
\end{theorem}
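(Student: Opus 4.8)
The plan is to reduce the theorem to a statement about individual primes and then to an effective form of Mertens' theorem. Write $\ell(a)$ and $P(a)$ for the least and largest prime factors of $a$. Partition $A=\bigcup_q A_q$ disjointly over primes $q$, where $A_q=\{a\in A:\ell(a)=q\}$; each $A_q$ is primitive, every element of $A_q$ has least prime factor $q$, and every $q$ with $A_q\ne\emptyset$ lies in $\PP(A)\subseteq[3,\exp(10^6)]$. Since $f(A)=\sum_q f(A_q)$, the theorem follows once we know that \emph{every prime $q$ with $3\le q\le\exp(10^6)$ is Erd\H os strong}, for then $f(A)=\sum_q f(A_q)\le\sum_q\tfrac1{q\log q}\le\sum_{p\in\PP(A)}\tfrac1{p\log p}$. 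So everything comes down to establishing Erd\H os strongness in this range.

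To treat a fixed prime $q$, let $A$ be primitive with $\ell(a)=q$ for all $a\in A$; we may assume $A\ne\{q\}$, so $q\notin A$. As in Erd\H os's original argument, the sets $\{n:\ a\mid n,\ \text{every prime factor of }n/a\text{ exceeds }P(a)\}$, $a\in A$, are pairwise disjoint, and here they consist of integers whose least prime factor is $q$; comparing densities gives
\[
\sum_{a\in A}\frac1a\prod_{p\le P(a)}\Big(1-\frac1p\Big)\ \le\ \frac1q\prod_{p<q}\Big(1-\frac1p\Big).
\]
I would now compare this with $f(A)$ term by term: since $P(a)\ge q$ for every $a\in A$, the bound $f(A)\le\tfrac1{q\log q}$ will follow provided
\[
\log a\cdot\prod_{q\le p\le P(a)}\Big(1-\frac1p\Big)\ \ge\ \log q\qquad\text{for every }a\in A .
\]
For fixed $P(a)$ the left side increases with $a$, and the smallest admissible $a$ is $q^2$ (when $P(a)=q$) or $qP$ (when $P(a)=P>q$); the first gives $2\log q\,(1-1/q)\ge\log q$, always true, so it remains to prove
\[
\log(qP)\prod_{p\le P}\Big(1-\frac1p\Big)\ \ge\ \log q\prod_{p<q}\Big(1-\frac1p\Big)\qquad\text{for all primes }P>q .
\]

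Here the Mertens race enters. For $P$ not much larger than $q$ the factor $\log(qP)$ provides slack; the critical regime is $P$ large, where the left side tends to $e^{-\gamma}$ by Mertens, so the inequality is governed by the statement
\[
e^{\gamma}\,\log q\prod_{p<q}\Big(1-\frac1p\Big)\ <\ 1 ,
\]
which lives in the race between $\prod_{p\le x}(1-1/p)$ and $e^{-\gamma}/\log x$ (equivalently, a lower bound for $\sum_{p<q}1/p$ against $\log\log q$). My approach would be to invoke effective forms of Mertens' product theorem — the Rosser--Schoenfeld estimate $\prod_{p\le x}(1-1/p)=\tfrac{e^{-\gamma}}{\log x}\big(1+O(\log^{-2}x)\big)$ with explicit constants, sharpened in the range at hand by the numerical verification of the Riemann Hypothesis up to large height (which controls $\psi(x)-x$, and hence $\prod_{p\le x}(1-1/p)$, for $x$ up to roughly $\exp(10^6)$) — to verify the displayed inequality, and in fact the full term-by-term inequality above, for all primes $q\le\exp(10^6)$ and all primes $P>q$; the finitely many ``small $P$'' exceptions where the effective bounds are too weak are handled by direct computation. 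Tracing back through the reduction then yields Theorem~\ref{thm:support}.

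The crux — and the step I expect to be the real obstacle — is establishing $e^{\gamma}\log q\prod_{p<q}(1-1/p)<1$, and more precisely the full term-by-term inequality, uniformly for $q\le\exp(10^6)$. One should not expect this to persist for \emph{all} primes $q$: the quantity $\sum_{p<q}1/p-\log\log q-M$ (with $M$ the Meissel--Mertens constant) is a prime-race quantity of the same flavour as $\pi(x)-\li(x)$ and is expected to change sign infinitely often, so infinitely many primes presumably fail to be Erd\H os strong by this method — which is exactly why Theorem~\ref{thm:race}, and the conjecture following it, can only be asserted with a density qualifier. The bound $\exp(10^6)$ reflects the reach of the currently available effective estimates (ultimately the height to which RH has been checked numerically); extending Theorem~\ref{thm:support} past it would require either stronger information on the zeros of $\zeta$ or a genuinely new idea, such as settling the Erd\H os-strong property for the prime $2$.
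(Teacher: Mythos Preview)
Your overall strategy matches the paper's: partition $A$ by least prime factor, use the Erd\H os density argument to bound $g(A'_q)\le g(q)$, and then convert this to $f(A'_q)\le f(q)$ via a term-by-term inequality of Mertens type. The paper calls a pair $p\le q$ a \emph{Mertens pair} when $\prod_{p\le r<q}(1-1/r)>\log p/\log(pq)$; your displayed inequality $\log(qP)\prod_{q\le p\le P}(1-1/p)\ge\log q$ is essentially the same object.

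The genuine gap is in what you try to prove. You reduce the theorem to the claim that \emph{every} prime $q\in[3,\exp(10^6)]$ is Erd\H os strong, and then propose to establish your term-by-term inequality for all primes $P>q$ with no upper bound on $P$. Letting $P\to\infty$ in your inequality forces $e^\gamma\log q\prod_{p<q}(1-1/p)\le 1$, i.e.\ that $q$ is a Mertens prime. But this is exactly the sign question in the Mertens race, and no effective error bound of the form $\prod_{p<q}(1-1/p)=e^{-\gamma}(1+E_q)/\log q$ with $|E_q|$ small can decide that sign. Worse, the first odd non-Mertens prime is expected to occur near $1.9\times10^{215}$, which is far below $\exp(10^6)\approx 10^{434294}$; so the intermediate claim you are aiming at is almost certainly \emph{false}, not merely hard to verify.

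The hypothesis $\PP(A)\subset[3,\exp(10^6)]$ is what rescues the argument, and you never use its upper bound. Since $P(a)\le\exp(10^6)$ for every $a\in A$, you only need the term-by-term inequality for primes $P\le\exp(10^6)$. With \emph{both} endpoints bounded, Dusart's explicit estimate $|E_r|\le 0.2/(\log r)^3$ (for $r>2{,}278{,}382$) controls the ratio $(1+E_P)/(1+E_q)$ directly, and a short calculation shows the desired inequality holds whenever $q>p_{10^8}$ and $P<\exp\bigl(4.999(\log q)^4\bigr)$; since $4.999(\log p_{10^8})^4>10^6$, this covers the whole range. For $q\le p_{10^8}$ one falls back on the direct verification that such $q$ are Mertens primes, in which case the inequality does hold for all $P$. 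This two-regime split is precisely the paper's argument, and it sidesteps the unprovable sign claim entirely.
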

If some primitive set $A$ of odd numbers exists with $f(A)>\sum_{p\in\PP(A)}1/(p\log p)$,
Theorem \ref{thm:support} suggests that it will be very difficult indeed to give a concrete example!

For a positive integer $n$, let $\Omega(n)$ denote the number of prime factors of $n$
counted with multiplicity.  Let $\N_k$ denote the set of integers $n$ with $\Omega(n)=k$.
Zhang \cite{zhang2} proved a result that implies $f(\N_k)< f(\N_1)$ for each $k\ge2$, so that the Erd\H os conjecture holds for the primitive sets $\N_k$.  More recently, Banks and Martin
\cite{bm} conjectured that $f(\N_1)>f(\N_2)>f(N_3)>\cdots$\,.  The inequality
$f(\N_2)>f(\N_3)$ was just established by Bayless, Kinlaw, and Klyve \cite{BKK}.
We prove the following result.
\begin{theorem}
\label{thm:Nk}
There is a positive constant $c$ such that $f(\N_k)\ge c$ for all $k$.
\end{theorem}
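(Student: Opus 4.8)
The plan is to convert the sum into an integral of a Dirichlet series and then extract a lower bound purely from positivity, avoiding any fine asymptotics. First I would record that for every integer $n\ge 2$ one has $\tfrac{1}{n\log n}=\int_1^\infty n^{-s}\,ds$, so that, interchanging sum and integral (legitimate by Tonelli, everything being nonnegative),
\[
f(\N_k)=\int_1^\infty F_k(s)\,ds,\qquad F_k(s):=\sum_{\Omega(n)=k}n^{-s}=[z^k]\prod_p\bigl(1-zp^{-s}\bigr)^{-1}.
\]
It is tempting instead to bound $f(\N_k)$ from below by restricting $\N_k$ to an explicit subfamily — for instance the integers $\le t$ all of whose prime factors lie below $t^{1/k}$ — but truncating $\N_k$ in any such way seems to lose a factor tending to $0$ with $k$, so it is essential to keep the entire series $F_k(s)$ and exploit nonnegativity of its coefficients.

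The key step is the lower bound $F_k(s)\ge P(s)^k/k!$ for $s>1$, where $P(s):=\sum_p p^{-s}$ is the prime zeta function. To get it, take logarithms in the Euler product: $\prod_p(1-zp^{-s})^{-1}=\exp\bigl(\sum_{j\ge1}\tfrac{z^j}{j}P(js)\bigr)=e^{zP(s)}h(z;s)$, where $h(z;s):=\exp\bigl(\sum_{j\ge2}\tfrac{z^j}{j}P(js)\bigr)$ is, being the exponential of a power series in $z$ with nonnegative coefficients and no constant term, itself a power series in $z$ with nonnegative coefficients and $h(0;s)=1$. Comparing $z^k$-coefficients in a product of two power series with nonnegative coefficients then gives $F_k(s)\ge [z^k]e^{zP(s)}=P(s)^k/k!$, hence $f(\N_k)\ge\tfrac1{k!}\int_1^\infty P(s)^k\,ds$. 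It remains to show $\int_1^\infty P(s)^k\,ds\gg k!$ uniformly in $k$.

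For this one needs only a crude lower bound on $P(s)$ as $s\to 1^+$. From $\log\zeta(s)=\sum_{j\ge1}\tfrac1j P(js)$ and the monotonicity of $P$ one gets $P(s)=\log\zeta(s)-\sum_{j\ge2}\tfrac1jP(js)>\log\zeta(s)-B$ for $s>1$, where $B:=\sum_{j\ge2}\tfrac1jP(j)=\sum_p\bigl(-\log(1-\tfrac1p)-\tfrac1p\bigr)$ is a positive, explicitly convergent constant ($B<\sum_p p^{-2}$). Combined with the elementary inequality $\zeta(s)>\tfrac1{s-1}$ for $s>1$ (immediate from $\zeta(s)=\tfrac{s}{s-1}-s\int_1^\infty\{x\}x^{-s-1}\,dx$ and $\int_1^\infty\{x\}x^{-s-1}\,dx<\tfrac1s$), this yields $P(s)>-\log(s-1)-B$, which is positive precisely for $s\in(1,1+e^{-B})$. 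Hence
\[
\int_1^\infty P(s)^k\,ds\ \ge\ \int_1^{1+e^{-B}}\bigl(-\log(s-1)-B\bigr)^k\,ds\ =\ e^{-B}\int_0^\infty w^k e^{-w}\,dw\ =\ e^{-B}\,k!,
\]
on substituting $w=-\log(s-1)-B$. Therefore $f(\N_k)\ge e^{-B}=\prod_p(1-\tfrac1p)e^{1/p}>0$ for every $k\ge 1$, which is the theorem.

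As for the main obstacle: there is no hard analytic step, and the real risk is strategic — trying to produce an explicit near-extremal subfamily of $\N_k$, which appears to bleed a factor $\to 0$. Once one works with $F_k(s)$ directly, nonnegativity of the coefficients of $h(z;s)$ collapses everything to the single inequality $F_k(s)\ge P(s)^k/k!$, after which only the Gamma-integral computation above remains, together with the routine bookkeeping (convergence and rearrangement of the logarithmic expansion of the Euler product for $s>1$ and $|z|$ small, and finiteness of $B$). One expects in fact $f(\N_k)\to\prod_p\frac{e^{-1/p}}{1-1/p}=e^{B}$ as $k\to\infty$, but establishing the limit would call for the Selberg--Delange method and is not needed here.
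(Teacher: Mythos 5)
Your argument is correct, and it is genuinely different from the one in the paper. The paper proves the theorem by invoking the Sathe--Selberg theorem (or Erd\H os's 1948 result) for $N_k(x)$, restricting to the range $e^{e^k}<x\le e^{e^{k+\lfloor\sqrt k\rfloor}}$, splitting that range into $\lfloor\sqrt k\rfloor$ blocks, and running a partial-summation argument with Stirling's formula at the end; the constant $c$ there is implicit. Your route avoids any distributional input about $\N_k$ at all: the identity $1/(n\log n)=\int_1^\infty n^{-s}\,ds$ converts $f(\N_k)$ into $\int_1^\infty F_k(s)\,ds$, the factorization $\prod_p(1-zp^{-s})^{-1}=e^{zP(s)}h(z;s)$ with $h$ having nonnegative coefficients and $h(0;s)=1$ gives $F_k(s)\ge P(s)^k/k!$ for free, and the elementary bounds $P(s)\ge\log\zeta(s)-B$ and $\zeta(s)>1/(s-1)$ turn the remaining integral into a Gamma integral that cancels the $k!$ exactly. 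I checked the substitution: with $w=-\log(s-1)-B$ one gets $\int_1^{1+e^{-B}}\bigl(-\log(s-1)-B\bigr)^k\,ds=e^{-B}k!$, so indeed $f(\N_k)\ge e^{-B}=\prod_p(1-1/p)e^{1/p}=e^{M-\gamma}\approx 0.729$, a clean explicit constant. What your approach buys is elementarity (no Sathe--Selberg, no uniformity issues as $k$ grows with $x$) and an explicit value of $c$; what the paper's approach buys is a method that generalizes naturally to restricted ranges of $a$ (which is used there to deduce Corollary~\ref{cor:largex}, that the $\limsup$ over primitive $A\subset[x,\infty)$ of $f(A)$ is positive) — your lower bound does not localize the mass of $\N_k$, so it would not immediately give that corollary. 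One minor remark: your closing speculation that $f(\N_k)\to e^{B}$ is not needed and I would state it more cautiously, but it does not affect the proof.
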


We let the letters $p,q,r$ represent primes.  In addition, we let $p_n$ represent the
$n$th prime.  For an integer $a>1$, we let $P(a)$ and $p(a)$ denote the largest and smallest prime factors of $a$. Modifying the notation introduced in \cite{ez}, for a primitive set $A$ let
\begin{align*}
 A_p & = \{a\in A: p(a)\ge p\},\\
 A'_p & = \{a\in A: p(a) = p\},\\
 A''_p & = \{a/p : a\in A'_p\}.
\end{align*}
We let $f(a)=1/(a\log a)$ and so $f(A)=\sum_{a\in A}f(a)$.  
In this language, Zhang's full result \cite{zhang2} states that $f((\N_k)'_p)\le f(p)$ for all primes $p$, $k\ge1$.
We also, let 
$$
g(a)=\frac1a\prod_{p<P(a)}\left(1-\frac1p\right),\quad h(a)=\frac1{a\log P(a)},
$$
with
$ g(A)=\sum_{a\in A}g(a)$ and $h(A)=\sum_{a\in A}h(a)$.

\section{The Erd\H os approach}

In this section we will prove Theorem \ref{thm:egamma}.
We begin with an argument inspired by the original 1935 paper of Erd\H os \cite{erdos35}.

\begin{proposition}\label{lem:erdos}
For any primitive set $A$, if $q\notin A$ then 
$$f(A'_q) < e^\gamma g(q) = \frac{e^\gamma}{q}\prod_{p<q}\bigg(1-\frac{1}{p}\bigg).$$
\end{proposition}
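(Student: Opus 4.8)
The plan is to use an Erd\H os-style counting argument. For each $a\in A$ consider the set
$$
S_a=\{\,n\in\N:\ a\mid n\ \text{ and }\ p(n/a)>P(a)\,\},
$$
with the convention $p(1)=\infty$. The first step is to verify that the family $\{S_a\}_{a\in A}$ is pairwise disjoint; this is where primitivity enters. If $n\in S_a\cap S_{a'}$ with, say, $P(a)\le P(a')$, then for every prime $r\le P(a)$ we have $r\nmid n/a$ and $r\nmid n/a'$, so the $r$-adic valuations satisfy $v_r(a)=v_r(n)=v_r(a')$; since also $v_r(a)=0$ for $r>P(a)$, this forces $a\mid a'$, and primitivity of $A$ gives $a=a'$. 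Writing $S_a=\{am:\ p(m)>P(a)\}$, we see that $S_a$ has natural density $\tfrac1a\prod_{p\le P(a)}(1-1/p)$.

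Next I restrict to $a\in A'_q$ for a fixed prime $q\notin A$. Every $n\in S_a$ then satisfies $p(n)=q$: the prime factors of $a$ are $\ge p(a)=q$, those of $n/a$ exceed $P(a)\ge q$, and $q\mid a\mid n$. Hence the disjoint union $\bigcup_{a\in A'_q}S_a$ is contained in $\{n:\ p(n)=q\}$, a set of natural density $g(q)=\tfrac1q\prod_{p<q}(1-1/p)$. Comparing densities (over finite subfamilies, then exhausting $A'_q$) gives
$$
\sum_{a\in A'_q}\frac1a\prod_{p\le P(a)}\Big(1-\frac1p\Big)\ \le\ g(q).
$$

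To finish I would upgrade this from $g$ to $f$ term by term, by showing that for every $a\in A'_q$,
$$
f(a)\ <\ \frac{e^\gamma}{a}\prod_{p\le P(a)}\Big(1-\frac1p\Big),\qquad\text{i.e.}\qquad \prod_{p\le P(a)}\frac{p}{p-1}\ <\ e^\gamma\log a;
$$
summing this strict inequality over the (nonempty, otherwise the Proposition is trivial) set $A'_q$ then yields $f(A'_q)<e^\gamma g(q)$. Since $q\notin A$, the number $a$ is a proper multiple of $q$, so either $a=q^k$ with $k\ge2$, whence $a\ge q^2=q\,P(a)$, or $a$ has a prime factor exceeding $q$, whence $q$ and $P(a)$ are distinct primes dividing $a$ and again $a\ge q\,P(a)$; either way $\log a\ge\log(2\,P(a))$. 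Thus it suffices to prove
$$
\prod_{p\le x}\frac{p}{p-1}\ <\ e^\gamma\log(2x)\qquad\text{for every prime }x\ge2,
$$
equivalently $\prod_{p\le x}(1-1/p)>e^{-\gamma}/\log(2x)$. I expect this Mertens-type estimate to be the main obstacle, being the only genuinely analytic input: for $x$ past an explicit threshold it should fall out of the effective form of Mertens' third theorem of Rosser and Schoenfeld, the $\log 2$ of slack comfortably absorbing the $O(1/\log^2 x)$ error term, and the finitely many remaining small primes $x$ can be dispatched by direct computation. Everything else in the argument is essentially combinatorial.
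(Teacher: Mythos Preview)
Your proposal is correct and follows essentially the same approach as the paper's proof: the Erd\H os density argument to get $\sum_{a\in A'_q}\frac1a\prod_{p\le P(a)}(1-1/p)\le g(q)$, followed by the effective Mertens inequality $\prod_{p\le x}(1-1/p)>e^{-\gamma}/\log(2x)$ (quoted from Rosser--Schoenfeld for $x\ge285$ and checked directly below) together with $a\ge 2P(a)$ for composite $a$. The only cosmetic difference is that you take $S_a$ with $p(n/a)>P(a)$ rather than $\ge P(a)$, giving $\prod_{p\le P(a)}$ in place of the paper's $\prod_{p<P(a)}$; this is immaterial, and your disjointness and $a\ge q\,P(a)$ arguments are in fact more carefully spelled out than in the paper.
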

\begin{proof}
For each $a\in A'_q$, let $S_a = \{ba : p(b) \ge P(a)\}$. Note that $S_a$ has asymptotic density $g(a)$. Since $A'_q$ is primitive, we see that the sets $S_a$ are pairwise disjoint. Further, the union of the sets $S_a$ is contained in the set of all natural numbers $m$ with $p(m) = q$, which has asymptotic density $g(q)$. Thus, the sum of densities for each $S_a$ is dominated by $g(q)$, that is,
\begin{align}\label{eq:g(A)}
g(A'_q)=\sum_{a\in A'_q}g(a) \le g(q).
\end{align}

By Theorem 7 in \cite{RS1}, we have for $x\ge285$,
\begin{equation}
\label{eq:mert}
\prod_{p\le x}\left(1-\frac1p\right)>\frac{1}{e^\gamma\log(2x)},
\end{equation}
which may be extended to all $x\ge 1$ by a calculation. Thus, since each $a \in A'_q$ is composite,
$$g(a)=\frac{1}{a}\prod_{p<P(a)}\bigg(1-\frac{1}{p}\bigg) > \frac{e^{-\gamma}}{ a\log\big(2P(a)\big)} > \frac{e^{-\gamma}}{ a\log a} = e^{-\gamma}f(a).$$
Hence by \eqref{eq:g(A)},
\begin{equation*}
%\label{eq:fgineq}
    f(A'_q)/e^\gamma < g(A'_q) \le g(q).
\end{equation*}
\end{proof}
\begin{remark}
Let $\sigma$ denote the sum-of-divisors function and let $A$ be the set of $n$ with
$\sigma(n)/n\ge2$ and $\sigma(d)/d<2$ for all proper divisors $d$ of $n$, the set of
primitive nondeficient numbers.  Then an appropriate analog of
 $g(A)$ gives the density of nondeficient numbers, recently shown in \cite{mits1} to lie in the tight interval
$(0.2476171,\,0.2476475)$.
In \cite{JDLpnd}, an analog of 
Proposition \ref{lem:erdos} is a key ingredient for sharp bounds on the reciprocal sum of the
primitive nondeficient numbers.
\end{remark}
\begin{remark}
We have $g(\mathcal P)=1$. It is easy to see by induction over primes $r$ that
\begin{equation*}
%\label{eq:g(P)}
\sum_{p\le r}g(p)=\sum_{p\le r}\frac1p\prod_{q<p}\left(1-\frac1q\right)=1-\prod_{p\le r}\left(1-\frac1p\right).
\end{equation*}
Letting $r\to\infty$ we get that $g(\mathcal P)=1$.  There is also a holistic way of seeing this.
Since $g(p)$ is the density of the set of integers with least prime factor $p$,
it would make sense that
$g(\mathcal P)$ is the density of the set of integers which have a least prime factor, which is 1.
To make this rigorous, one notes that the density of the set of integers whose least prime factor
is $>y$ tends to 0 as $y\to\infty$.
As a consequence of $g(\mathcal P)=1$, we have
\begin{equation}
\label{eq:ident}
\sum_{p>2}g(p)=\frac12,
\end{equation}
an identity we will find to be useful.
\end{remark}

For a primitive set $A$, let
$$A^k = \{a: 2^k\| a\in A\}, \qquad B^k = \{a/2^k: a\in A^k\}.$$
The next result will help us prove Theorem \ref{thm:egamma}.

\begin{lemma}\label{lem:egamma}
For a primitive set $A$, let $k\ge1$ be such that $2^k\notin A$. Then we have
$$f(A^{k}) < \frac{e^\gamma}{2^k}\sum_{p\notin A\atop p>2}g(p).$$
\end{lemma}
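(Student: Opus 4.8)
The plan is to reduce the claimed bound for $f(A^k)$ to the already-established Proposition~\ref{lem:erdos} applied to the set $B^k$. First I would observe that the map $a \mapsto a/2^k$ carries $A^k$ bijectively onto $B^k$, and that $B^k$ is itself a primitive set consisting of odd integers (every $a \in A^k$ has $2^k \| a$, so $a/2^k$ is odd; primitivity of $B^k$ follows from primitivity of $A$). Thus each $b \in B^k$ has least prime factor $p(b) \ge 3$, and I can partition $B^k = \bigcup_{p > 2} (B^k)'_p$ over odd primes $p$. Moreover, for a given odd prime $p$, if $2^k p \in A$ then — since $2^k p$ divides $2^k \cdot b$ for any $b \in B^k$ with $p(b) = p$ unless $b = p$ itself, and $A$ is primitive — the set $(B^k)'_p$ can only be nonempty if $p \notin A$ (one has to check the edge case $b=p$, i.e.\ $2^kp \in A$, separately, but then $(B^k)'_p = \{p\}$ forces $2^kp$ to be in $A$ while also $2^k p \mid 2^k p$, which is fine; the real point is that if $p \in A$ then $p \mid 2^k b$ for all relevant $b$, contradicting primitivity of $A$ unless $B^k$ has no element with $p(b)=p$). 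So only primes $p \notin A$ with $p > 2$ contribute.

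Next I would estimate $f(A^k)$ in terms of $f$ on the shifted set. For $a \in A^k$ write $a = 2^k b$ with $b \in B^k$ odd and composite-or-prime; then
$$
f(a) = \frac{1}{2^k b \log(2^k b)} \le \frac{1}{2^k}\cdot\frac{1}{b \log b} = \frac{1}{2^k} f(b),
$$
using $\log(2^k b) \ge \log b$. Summing over $a \in A^k$ gives $f(A^k) \le \frac{1}{2^k} f(B^k) = \frac{1}{2^k} \sum_{p > 2} f\big((B^k)'_p\big)$, where the sum is effectively restricted to $p \notin A$ by the previous paragraph. Now I apply Proposition~\ref{lem:erdos} to the primitive set $B^k$: for each odd prime $p \notin A$ (equivalently $q = p \notin B^k$, which holds because $p \notin A$ implies $p \notin B^k$ — note $p \in B^k$ would mean $2^k p \in A$, and I would need $p \notin A$ to be the operative hypothesis; actually the cleaner route is to invoke the Proposition with the set $A := B^k$ and note $q \notin B^k$ whenever $2^k q \notin A$, handled as an edge case), we get $f\big((B^k)'_p\big) < e^\gamma g(p)$.

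Combining, $f(A^k) \le \frac{1}{2^k}\sum_{p \notin A,\, p>2} f\big((B^k)'_p\big) < \frac{e^\gamma}{2^k} \sum_{p \notin A,\, p > 2} g(p)$, which is exactly the claimed inequality (strict, since at least the inequality $f(a) < \frac{1}{2^k}f(b)$ is strict whenever $k \ge 1$, or because Proposition~\ref{lem:erdos} is already strict — and if $A^k$ is empty the statement is vacuous after noting the right side is positive). The main obstacle I anticipate is bookkeeping the hypothesis correctly: Proposition~\ref{lem:erdos} requires $q \notin A$ for the set it is applied to, so I must carefully translate ``$p \notin A$'' (the hypothesis in Lemma~\ref{lem:egamma}, about the original set $A$) into the condition ``$p \notin B^k$'' needed to apply the Proposition to $B^k$, and verify that when $p \in A$ the corresponding piece $(B^k)'_p$ is empty so it contributes nothing. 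The condition $2^k \notin A$ is what makes $B^k$ genuinely primitive-compatible with these sums (it rules out $b=1$, i.e.\ it ensures $B^k$ has no element equal to $1$); this needs to be invoked at the point where I claim $B^k$ is a primitive set of integers $> 1$, matching the standing convention. The rest is the routine estimate $\log(2^k b) \ge \log b$ and summation.
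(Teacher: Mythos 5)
Your overall structure is right—push everything to $B^k$, split by least prime factor, and invoke Proposition~\ref{lem:erdos}—but there is a genuine gap in what you call the ``edge case,'' and it cannot be papered over.

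The gap is the case where $2^k p \in A$ for some odd prime $p$. You assert that ``$p \notin A$ implies $p \notin B^k$,'' but this is false: $p \in B^k$ iff $2^k p \in A$, which is entirely compatible with $p \notin A$ (indeed $p\in A$ would then contradict primitivity). In this situation $(B^k)'_p=\{p\}$, and Proposition~\ref{lem:erdos} simply does not apply to $B^k$ with $q=p$, because its hypothesis is $q \notin B^k$. Worse, the inequality you would then need, namely $f(p) < e^\gamma g(p)$, is \emph{false} for Mertens primes: the defining inequality \eqref{eq:help} of a Mertens prime is exactly $e^\gamma g(p)\le f(p)$, and the paper shows that all odd primes up to $p_{10^8}$ are Mertens. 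For instance, $f(3)=1/(3\log 3)\approx 0.3034$ while $e^\gamma g(3)=e^\gamma/6\approx 0.2969$. So the chain $f(A^k)\le 2^{-k}\sum_p f((B^k)'_p) < 2^{-k}e^\gamma\sum_p g(p)$ breaks at the second inequality whenever some $2^kp\in A$.

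The reason your route fails is that the bound $\log(2^k b)\ge \log b$ discards exactly the savings that the case $b=p$ requires. The paper handles $2^kp\in A$ separately by \emph{not} pulling out the factor $2^{-k}$ prematurely: it bounds $f(2^kp)$ directly via $\log(2^kp)\ge \log(2p)$ (this is where the hypothesis $k\ge1$ earns its keep), and then uses the extended Rosser--Schoenfeld estimate \eqref{eq:mert}, which gives $e^\gamma g(p) > 1/(p\log(2p))$, so that $f(2^kp) \le 1/(2^k p\log(2p)) < (e^\gamma/2^k)\,g(p)$. Your argument works without change for the primes $p$ with $2^kp\notin A$ (then $p\notin B^k$, Proposition~\ref{lem:erdos} applies, and $(B^k)'_p$ consists of composites so the crude $\log(2^kb)\ge\log b$ is harmless), but the singleton case must be treated by the sharper logarithm bound; it is not an edge case that can be waved away.
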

\begin{proof}
If $2^kp\notin A$ for a prime $p>2$, then $(B^k)'_p$ is a primitive set of odd composite numbers,
 so by Proposition \ref{lem:erdos}, $f((B^k)'_p) < e^\gamma g(p)$.

Now if $2^kp\in A$ for some odd prime $p$, then $(B^{k})'_p=\{p\}$ and note $p\notin A$ by primitivity. We have $f(2^kp) < 2^{-k}e^\gamma g(p)$ since
$$
\frac1{2^kp\log(2^kp)}\le\frac1{2^kp\log(2p)}<\frac{e^\gamma}{2^k}g(p),
$$
which follows from \eqref{eq:mert}.  Hence combining the two cases,
\begin{align*}
f(A^k)=\sum_{p\notin A\atop p>2}f(2^k{\cdot}(B^k)'_p)& \le \sum_{p\in B^k,p\notin A\atop p>2}f(2^kp) + 2^{-k}\sum_{p\notin B^k,p\notin A\atop p>2}f((B^k)'_p)\\
& < \frac{e^\gamma}{2^k}\sum_{p\notin A\atop p>2}g(p).
\end{align*}
\end{proof}
With Lemma \ref{lem:egamma} in hand, we prove $f(A)<e^\gamma$.
\begin{proof}[Proof of Theorem \ref{thm:egamma}]
 From Erd\H os--Zhang \cite{ez}, we have that $f(A_3)<0.92$.  If $2\in A$, then $A'_2=\{2\}$, so that
 $f(A)=f(A_3)+f(A'_2)<0.92+1/(2\log2)<e^\gamma$.  Hence we may assume that $2\notin A$.
 If $A$ contains every odd prime, then $f(A'_2)$ consists of at most one power
 of 2, and the calculation just concluded shows we may assume this is not the case.
Hence there is at least one odd prime $p_0\notin A$.
By Proposition \ref{lem:erdos}, we have
\begin{align}\label{eq:A}
    f(A) &= \sum_pf(A'_p)= \sum_{p\in A}f(p) + \sum_{p\notin A} f(A'_p) < \sum_{p\in A}f(p) + e^\gamma\sum_{p\notin A\atop p>2} g(p) + f(A'_2).
\end{align}
First suppose $A$ contains no powers of $2$. Then by Lemma \ref{lem:egamma},
\begin{align*}
%\label{eq:A'2}
    f(A'_2) = \sum_{k\ge1}f(A^{k}) < \sum_{k\ge1}\frac{e^\gamma}{2^k}\sum_{p\notin A\atop p>2}g(p) = e^\gamma\sum_{p\notin A\atop p>2}g(p).
\end{align*}
Substituting into  \eqref{eq:A}, we conclude, using \eqref{eq:ident},
\begin{align}
\label{eq:fA}
    f(A) & < \sum_{p\in A}f(p) + 2e^\gamma\sum_{p\notin A\atop p>2}g(p) \le 2e^\gamma\sum_{p>2}g(p) = e^\gamma.
\end{align}
For the last inequality we used that for every prime $p$,
\begin{equation}
\label{eq:fgineq2}
\frac{f(p)}{e^\gamma g(p)}<1.082,
\end{equation}
which follows after a short calculation using \cite[Theorem 7]{RS1}.

Now if $2^K\in A$ for some positive integer $K$, then $K$ is unique and $K\ge 2$.
Also $A^K=\{2^K\}$ and
 $A^k=\emptyset$ for all $k>K$, so again by Lemma \ref{lem:egamma},
\begin{align*}
    f(A'_2) = \sum_{k=1}^Kf(A^{k}) = f(2^K) + \sum_{k=1}^{K-1}\frac{e^\gamma}{2^k}\sum_{p\notin A\atop p>2}g(p) = f(2^K) + (1-2^{1-K})e^\gamma\sum_{p\notin A\atop p>2}g(p).
\end{align*}
Substituting into \eqref{eq:A} gives
\begin{align}
\label{eq:fAK}
    f(A) < \sum_{p\in A}f(p) + f(2^K) + (2-2^{1-K})e^\gamma\sum_{p\notin A\atop p>2}g(p) & \le f(2^K) + (2-2^{-1})e^\gamma\sum_{p>2}g(p)\nonumber\\
    & \le f(2^2) + (1-2^{-2})e^\gamma < e^\gamma,
\end{align}
using $K\ge2$, the identity \eqref{eq:ident}, inequality \eqref{eq:fgineq2}, and $f(2^2)< 2^{-2} e^\gamma$. This completes the proof.
\end{proof}

%Proposition \ref{lem:erdos} implies the following corollaries.
%
%\begin{corollary}\label{cor:comp}
%If $A$ is a primitive set of composite numbers, then $f(A) < e^\gamma$.
%\end{corollary}
%\begin{proof}
%Using Proposition \ref{lem:erdos}, we have
%$$f(A)=\sum_n f(A'_q)< \sum_q e^\gamma g(q) = e^\gamma g(\mathcal P),
%$$
%so it suffices to show $g(\mathcal P)=1$. It is easy to see by induction over primes $r$ that
%\begin{equation}
%\label{eq:ident}
%g(\mathcal P\cap[1,r])=\sum_{q\le r}\frac1q\prod_{p<q}\left(1-\frac1p\right)=1-\prod_{q\le r}\left(1-\frac1q\right).
%\end{equation}
%Letting $r\to\infty$ we get that $g(\mathcal P)=1$.
%\end{proof}
%
%We remark that the slightly weaker inequality $f(A)\le e^\gamma$ when $A$ consists of
%composite numbers follows directly from the argument in Erd\H os--Zhang \cite{ez},
%as pointed out in Clark \cite{clark}.

\section{Mertens primes}

In this section we will prove Theorems \ref{thm:race} and Theorem \ref{thm:support}.
Note that by Mertens' theorem,
$$
\prod_{p<x}\left(1-\frac1p\right)\sim\frac1{e^\gamma\log x},\quad x\to\infty,
$$
where $\gamma$ is Euler's constant.
We say a prime $q$ is {\bf Mertens} if 
\begin{equation}
\label{eq:help}
e^\gamma\prod_{p<q}\Big(1-\frac{1}{p}\Big) \le \frac{1}{\log q},
\end{equation}
and let $\mathcal P^{\textrm{Mert}}$ denote the set of Mertens primes. We are interested in Mertens primes because of the following consequence of Proposition \ref{lem:erdos}, which shows that every Mertens prime is Erd\H os strong.

\begin{corollary}\label{cor:mert}
Let $A$ be a primitive set.
If $q\in \PP^{\rm Mert}$, then $f(A'_q)\le f(q)$. Hence if $A'_q \subset \{q\}$ for all $q\notin \PP^{\rm Mert}$, then $A$ satisfies the Erd\H os conjecture.
\end{corollary}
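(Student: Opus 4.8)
The plan is to derive Corollary~\ref{cor:mert} directly from Proposition~\ref{lem:erdos} together with the definition of a Mertens prime. First I would treat the trivial case: if $q\in A$, then primitivity forces $A'_q=\{q\}$ (any other $a\in A'_q$ would have $q\mid a$), so $f(A'_q)=f(q)$ and we are done. Hence I may assume $q\notin A$, which is exactly the hypothesis needed to invoke Proposition~\ref{lem:erdos}.

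With $q\notin A$, Proposition~\ref{lem:erdos} gives
\[
f(A'_q) < e^\gamma g(q) = \frac{e^\gamma}{q}\prod_{p<q}\Bigl(1-\frac1p\Bigr).
\]
Now the Mertens condition \eqref{eq:help} says precisely that $e^\gamma\prod_{p<q}(1-1/p)\le 1/\log q$. Multiplying both sides by $1/q$ turns the right-hand side above into $\frac{1}{q\log q}=f(q)$, so $f(A'_q)<f(q)$, and in all cases $f(A'_q)\le f(q)$. This proves the first assertion.

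For the second assertion, decompose $f(A)=\sum_p f(A'_p)$ over all primes $p$. For $q\in\PP^{\rm Mert}$ the first part gives $f(A'_q)\le f(q)$; for $q\notin\PP^{\rm Mert}$ the hypothesis $A'_q\subset\{q\}$ gives $f(A'_q)\le f(q)$ as well (it is either $f(q)$ or $0$). Summing over all primes yields $f(A)\le\sum_p f(p)=C$, which is the Erd\H os conjecture \eqref{eq:conj} for $A$.

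There is essentially no obstacle here; the corollary is a packaging of Proposition~\ref{lem:erdos} once one observes that \eqref{eq:help} is the exact inequality converting the bound $e^\gamma g(q)$ into $f(q)$. The only point requiring a moment's care is the case $q\in A$, where Proposition~\ref{lem:erdos} does not apply but primitivity makes the conclusion immediate.
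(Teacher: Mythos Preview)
Your proof is correct and follows essentially the same route as the paper: the paper compresses the two cases into the single statement $f(A'_q)\le\max\{e^\gamma g(q),f(q)\}$ and then shows $e^\gamma g(q)\le f(q)$ via \eqref{eq:help}, while you make the case split $q\in A$ versus $q\notin A$ explicit. Your treatment of the second assertion (the decomposition $f(A)=\sum_p f(A'_p)$) spells out what the paper leaves to the reader.
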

\begin{proof}
By Proposition \ref{lem:erdos} we have $f(A'_q)\le\max\{e^\gamma g(q),f(q)\}$. If $q\in\PP^{\textrm{Mert}}$, then 
$$e^\gamma g(q) = \frac{e^\gamma}{q}\prod_{p<q}\bigg(1-\frac{1}{p}\bigg)\le \frac{1}{q\log q} =f(q),$$
so $f(A'_q)\le f(q)$.
\end{proof}

Now, one would hope that the Mertens inequality \eqref{eq:help}
holds for all primes $q$. However, \eqref{eq:help} fails for $q=2$ since $e^\gamma > 1/\log 2$. We have computed that $q$ is indeed a Mertens prime for all $2<q\le p_{10^8} = 2{,}038{,}074{,}743$, thus proving the unconditional part of Theorem \ref{thm:race}.

\subsection{Proof of Theorem \ref{thm:race}}

To complete the proof, we use a result of Lamzouri \cite{lamz} relating the Mertens inequality to the race between $\pi(x)$
and $\li(x)$, studied by Rubinstein and Sarnak \cite{RubSarn}. Under the assumption of RH and LI,
he proved that the set $\mathcal N$ of real numbers $x$ satisfying
\begin{align*}
    e^\gamma \prod_{p\le x}\bigg(1-\frac{1}{p}\bigg) > \frac{1}{\log x},
\end{align*}
has logarithmic density $\delta(\mathcal{N})$ equal to the logarithmic density of numbers $x$ with $\pi(x)>\li(x)$,
and in particular
\begin{align}
\delta(\mathcal N) = \lim_{x\to\infty}\frac{1}{\log x}\int_{t\in \mathcal N\cap[2,x]}\frac{dt}{t} = 0.00000026\ldots\,.
\end{align}

We note that if a prime $p=p_n\in \mathcal N$, then for $p'=p_{n+1}$ we have $[p,p')\subset\mathcal N$ because the prime product on the left-hand side is constant on $[p,p')$, while $1/\log x$ is decreasing for $x\in [p,p')$. 

The set of primes $\QQ$ in $\mathcal N$ is precisely the set of non-Mertens primes, so $\QQ=\mathcal P\setminus\mathcal P^{\textrm{Mert}}$.  From the above observation, we may leverage knowledge of the continuous logarithmic density $\delta(\mathcal N)$ to obtain an upper bound on the relative (upper) logarithmic density of non-Mertens primes
\begin{align}
\label{eq:dens}
\bar\delta(\QQ) := \limsup_{x\to \infty}\frac{1}{\log x}\sum_{p\le x\atop p\in \QQ}\frac{\log p}{p}.
\end{align}

 From the above observation, we have
\begin{align*}
\delta(\mathcal N) \ge \limsup_{x\to\infty}\frac{1}{\log x}\sum_{p\le x\atop p\in \mathcal Q}\int_{p}^{p'}\frac{dt}{t} & = \limsup_{x\to\infty}\frac{1}{\log x}\sum_{p\le x\atop p\in \mathcal Q}\log(p'/p).
\end{align*}
Then letting $d_p=p'-p$ be the gap between consecutive primes, we have
\begin{align*}
\delta(\mathcal N) \ge \limsup_{x\to\infty}\frac{1}{\log x}\sum_{p\le x\atop p\in \mathcal Q}\frac{d_p}{p},
\end{align*}
since $\sum\log(p'/p) = \sum d_p/p + O(1)$. The average gap is roughly $\log p$, so we may consider the primes for which $d_p < \epsilon \log p$, for a small positive constant $\epsilon$ to be determined.

We claim
\begin{align}\label{eq:RVclaim}
\limsup_{x\to\infty}\frac1{\log x}\sum_{\substack{p\le x\\ d_p < \epsilon \log p}}\frac{\log p}{p} \ \le \ 16\epsilon,
\end{align}
from which it follows
\begin{align*}
\bar\delta(\QQ) & = \limsup_{x\to\infty}\frac{1}{\log x}\sum_{p\le x\atop p\in \QQ}\frac{\log p}{p} \le \limsup_{x\to\infty}\frac{1}{\log x}\Big(\sum_{\substack{p\le x\\ p\in \QQ\\d_p \ge \epsilon \log p}}\frac{d_p/\epsilon}{p} + \sum_{\substack{p\le x\\ d_p < \epsilon \log p}}\frac{\log p}{p}\Big)\\
& \le \delta(\mathcal N)/\epsilon + 16\epsilon.
\end{align*}
Hence to prove Theorem \ref{thm:race} it suffices to prove \eqref{eq:RVclaim}, since taking $\epsilon = \sqrt{\delta(\mathcal N)}/4$ gives
\begin{align}
    \bar\delta(\QQ) < 8\sqrt{\delta(\mathcal N)} < 4.2\times10^{-3}.
\end{align}

By Riesel-Vaughan \cite[Lemma 5]{RV}, the number of primes $p$ up to $x$ with $p+d$ also prime is at most
\begin{align*}
    \sum_{p\le x\atop p+d\textrm{ prime}}1 \le \frac{8c_2x}{\log^2 x}\prod_{p\mid d\atop p>2}\frac{p-1}{p-2},
\end{align*}
where $c_2$ is for the twin-prime constant $2\prod_{p>2}p(p-2)/(p-1)^2=1.3203\ldots$. 
Denote the prime product by $F(d) = \prod_{p\mid d\atop p>2}\frac{p-1}{p-2}$, and consider the multiplicative function $H(d) = \sum_{u\mid d}\mu(u)F(d/u)$. We have $H(2^k)=0$ for all $k\ge1$, and for $p>2$ we have $H(p)=F(p)-1$, and $H(p^k)=0$ if $k\ge2$. Thus,
\begin{align*}
\sum_{d\le y} F(d) & = \sum_{d\le y}\sum_{u\mid d}H(u) = \sum_{u\le y}H(u)\sum_{d\le y/u}1\le y\sum_{u\le y}\frac{H(u)}{u} \le y\prod_{p>2}\Big(1 + \frac{H(p)}{p}\Big)\\
& = y\prod_{p>2}\Big(1 + \frac{(p-1)/(p-2)-1}{p}\Big) = y\prod_{p>2}\Big(1 + \frac{1}{p(p-2)}\Big).
\end{align*}
Noting that $c_2':=\prod_{p>2}(1 + 1/[p(p-2)])=2/c_2$, we have
\begin{align*}
\sum_{\substack{p\le x\\ d_p < \epsilon \log p}}1\le \sum_{d\le \epsilon\log x}\sum_{p\le x\atop p+d\textrm{ prime}}1 \le \frac{8c_2x}{\log^2 x}\sum_{d\le \epsilon\log x}F(d) \le \epsilon\frac{8c_2c_2'x}{\log x} = \epsilon\frac{16x}{\log x}.
\end{align*}
Thus, \eqref{eq:RVclaim} now follows by partial summation, and the proof is complete.

\begin{remark}
The concept of relative upper logarithmic density of the set of non-Mertens primes in
\eqref{eq:dens}  can be replaced in the theorem with
$$
\bar\delta_0(\QQ):=\limsup_{x\to\infty}\frac1{\log\log x}\sum_{\substack{p\le x\\p\in \QQ}}\frac1p.
$$
Indeed, $\bar\delta_0(\QQ)\le\bar\delta(\QQ)$ follows from the identity
$$
\sum_{\substack{p\le x\\p\in \QQ}}\frac1p=\frac1{\log x}\sum_{\substack{p\le x\\p\in \QQ}}\frac{\log p}p
+\int_2^x\frac1{t(\log t)^2}\sum_{\substack{p\le t\\p\in \QQ}}\frac{\log p}p\,dt.
$$
\end{remark}

\begin{remark}
\label{rmk:martin}
Greg Martin has indicated to us that one should be able to prove (under RH and LI) that the relative logarithmic density
of $\QQ$ exists and is equal to the logarithmic density of $\mathcal N$.  The idea is as follows.  Partition
the positive reals into intervals of the form $[y,y+y^{1/3})$.  Let $E_1$ be the union of those intervals
$[y,y+y^{1/3})$ where the sign of $e^\gamma\prod_{p\le x}(1-1/p)-1/\log x$ is not constant and let
$E_2$ be the union of those intervals $[y,y+y^{1/3})$ which do not have $\sim y^{1/3}/\log y$ primes
as $y\to\infty$.  The the logarithmic density of $E_1\cup E_2$ can be shown to be 0, from which the assertion 
follows.
\end{remark}

\subsection{Proof of Theorem \ref{thm:support}}

We now use some numerical estimates of Dusart \cite{dusart} to prove Theorem \ref{thm:support}.

We say a pair of primes $p\le q$ is a {\bf Mertens pair} if
$$
\prod_{p\le r<q}\left(1-\frac1r\right)>\frac{\log p}{\log pq}.
$$
We claim that every pair of primes $p,q$ with $2<p\le q<e^{10^6}$ is a Mertens pair.
Assume this and let $A$ be a primitive set supported on the odd primes to $e^{10^6}$.
By \eqref{eq:g(A)}, if $p\notin A$, we have
\begin{align*}
\frac1{p}&\ge\sum_{a\in A'_p}\frac1a\prod_{p\le r <P(a)}\left(1-\frac1{r}\right)
>\sum_{a\in A'_p}\frac{\log p}{a\log(p\,P(a))}\\
&\ge\sum_{a\in A'_p}\frac{\log p}{a\log a}=f(A'_p)\log p.
\end{align*}
Dividing by $\log p$ we obtain $f(A'_p)\le f(p)$, which also holds if $p\in A$. Thus, the claim about Mertens pairs implies the theorem.

To prove the claim, first note that if $p$ is
a Mertens prime, then $p,q$ is a Mertens pair for all primes $q\ge p$.  Indeed, we have
$$
\prod_{p\le r<q}\left(1-\frac1r\right)=\prod_{r<p}\left(1-\frac1r\right)^{-1}\prod_{r<q}\left(1-\frac1r\right)
>e^\gamma \log p\prod_{r<q}\left(1-\frac1r\right).
$$
By \eqref{eq:mert}, this last product exceeds $e^{-\gamma}/\log(2q)>e^{-\gamma}/\log(pq)$,
and using this in the above display shows that $p,q$ is indeed a Mertens pair.  Since
all of the odd primes up to $p_{10^8}$ are Mertens, to complete the proof of our assertion,
it suffices to consider the case when $p>p_{10^8}$.  Define $E_p$ via the
equation
$$
\prod_{r<p}\left(1-\frac1r\right)=\frac{1+E_p}{e^\gamma \log p}.
$$
Using \cite[Theorem 5.9]{dusart}, we have for $p>2{,}278{,}382$,
\begin{equation}
\label{eq:D}
|E_p|\le .2/(\log p)^3.
\end{equation}
  A routine calculation
shows that if $p\le q<e^{4.999(\log p)^4}$, then
$$
\prod_{p\le r<q}\left(1-\frac1r\right)=\frac{\log p}{\log q}\cdot\frac{1+E_q}{1+E_p}
>\frac{\log p}{\log pq}.
$$
It remains to note that $4.999(\log p_{10^8})^4 >1{,}055{,}356$.

It seems interesting to record the principle that we used in the proof.
\begin{corollary}
\label{cor:mertens}
If $A$ is a primitive set such that $p(a),P(a)$ is a Mertens pair for each $a\in A$, then
$f(A)\le f(\PP(A))$.
\end{corollary}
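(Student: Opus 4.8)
The plan is to extract the common mechanism from the proof of Theorem~\ref{thm:support} and phrase it for a general primitive set $A$ whose elements need not all be odd, nor supported below a fixed bound. The key observation is that the only place the bound $q < e^{10^6}$ entered the argument was to guarantee that every relevant pair $p(a), P(a)$ is a Mertens pair; if we simply \emph{assume} that hypothesis, the same computation goes through. So I would argue partwise over the least prime factor: for each prime $p$ occurring as some $p(a)$, bound $f(A'_p)$ by $f(p)$, then sum over $p \in \PP(A)$.

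First I would fix a prime $p$ with $A'_p \neq \emptyset$ and treat the case $p \notin A$. As in the proof of Proposition~\ref{lem:erdos}, the sets $S_a = \{ba : p(b) \ge P(a)\}$ for $a \in A'_p$ are pairwise disjoint (by primitivity of $A'_p$) and all lie inside the set of integers with least prime factor $p$; comparing densities gives
\begin{align*}
\frac1p \ge \sum_{a \in A'_p} \frac1a \prod_{p \le r < P(a)} \left(1 - \frac1r\right).
\end{align*}
Now the Mertens-pair hypothesis applied to the pair $p(a) = p$ and $P(a)$ says exactly that the inner product exceeds $\log p / \log(p\,P(a))$. Since each $a \in A'_p$ has $a \ge p\,P(a)$ when $a$ is composite — and when $a = p$ itself is prime the inequality $f(A'_p) = f(p)$ is immediate — we get $\log(p\,P(a)) \le \log(a \cdot a) $... more carefully, I want $\log(p P(a)) \le \log a$; this holds because $p \le P(a) \le a/p \cdot p$, i.e. $p\,P(a) \mid a$ or at least $p\,P(a) \le a$. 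Hence
\begin{align*}
\frac1p \ge \sum_{a \in A'_p} \frac{\log p}{a \log a} = f(A'_p)\log p,
\end{align*}
and dividing by $\log p$ gives $f(A'_p) \le f(p) = 1/(p\log p)$. The case $p \in A$ forces $A'_p = \{p\}$ by primitivity, so $f(A'_p) = f(p)$ trivially.

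Finally I would sum: $f(A) = \sum_{p} f(A'_p) \le \sum_{p \in \PP(A)} f(p) = f(\PP(A))$, where the sum ranges over primes $p$ that actually divide some element of $A$, i.e. over $\PP(A)$. The main subtlety — really the only thing to check with care — is the inequality $p(a)\,P(a) \le a$ used to pass from $\log(p\,P(a))$ to $\log a$: this is clear when $a$ is composite since then $a$ is divisible by both its smallest and largest prime factors and these are distinct unless $a = p^2$, in which case $p(a)P(a) = p^2 = a$ still works; and when $a = p$ is prime one handles it directly as above. I do not anticipate a genuine obstacle here, since every ingredient is already present in the proof of Theorem~\ref{thm:support}; the corollary is essentially a restatement isolating the hypothesis ``all pairs $p(a), P(a)$ are Mertens pairs'' as the operative condition.
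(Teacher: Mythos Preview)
Your proposal is correct and follows exactly the argument the paper uses to prove Theorem~\ref{thm:support}; the corollary is stated without separate proof precisely because it just records this principle. The only minor imprecision is in your final summation: the decomposition $f(A)=\sum_p f(A'_p)$ naturally ranges over primes $p$ occurring as a \emph{least} prime factor of some $a\in A$, which is a subset of $\PP(A)$, so the bound $\sum f(p)\le f(\PP(A))$ still holds---but the sum is not literally over all of $\PP(A)$ as you wrote.
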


\begin{remark}
\label{rmk:ford}
Kevin Ford has noted to us the remarkable similarity between the concept of Mertens primes in this
paper and the numbers 
$$
\gamma_n=\left(\gamma+\sum_{k\le n}\frac{\log p_k}{p_k-1}\right)\prod_{k\le n}\left(1-\frac1{p_k}\right)
$$
discussed in Diamond--Ford \cite{DF}.  In particular, while it may not be obvious from the definition, the analysis in 
\cite{DF} on whether the sequence $\gamma_1,\gamma_2,\dots$ is monotone
 is quite similar to the analysis in \cite{lamz} on the Mertens inequality. Though the numerical evidence
 seems to indicate we always have $\gamma_{n+1}<\gamma_n$, this is disproved in \cite{DF}, and it is
 indicated there that the first time this fails may be near $1.9\cdot10^{215}$.  This may also be near where the
 first odd non-Mertens prime exists.  If this is the case, and under assumption of RH, it may be that
 every pair of primes $p\le q$ is a Mertens pair when $p>2$ and $q < \exp(10^{100})$.
 \end{remark}

\section{Odd primitive sets}

In this section we prove Theorem \ref{thm:no8s} and establish a curious result on parity for primitive sets.

Let
$$
\epsilon_0=\sum_{\substack{p>2\\p\notin\PP^{\rm Mert}}}\left(e^\gamma g(p)-f(p)\right).
$$
\begin{lemma}
\label{lem:eps}
We have $0\le\epsilon_0<2.37\times10^{-7}$.
\end{lemma}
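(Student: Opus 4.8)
The quantity $\epsilon_0=\sum_{p>2,\,p\notin\PP^{\rm Mert}}\bigl(e^\gamma g(p)-f(p)\bigr)$ is a sum over the odd non-Mertens primes, and the nonnegativity is immediate: each term with $p\notin\PP^{\rm Mert}$ satisfies $e^\gamma g(p)>f(p)$ by the very definition of a Mertens prime (the failure of \eqref{eq:help} for $q=p$ is exactly $e^\gamma\prod_{r<p}(1-1/r)>1/\log p$, i.e. $e^\gamma g(p)>f(p)$). So the content is the upper bound, and the plan is to split the sum at the threshold $p_{10^8}=2{,}038{,}074{,}743$ below which we have verified computationally that every odd prime is Mertens. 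Hence the sum ranges only over non-Mertens primes $p>p_{10^8}$, and for every such prime \eqref{eq:D} applies: writing $\prod_{r<p}(1-1/r)=(1+E_p)/(e^\gamma\log p)$ with $|E_p|\le 0.2/(\log p)^3$ (valid since $p>p_{10^8}>2{,}278{,}382$), we get
$$
e^\gamma g(p)-f(p)=\frac{1}{p}\Bigl(\frac{1+E_p}{\log p}\Bigr)-\frac{1}{p\log p}=\frac{E_p}{p\log p}\le\frac{0.2}{p(\log p)^4}.
$$

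So it remains to bound $\sum_{p>p_{10^8}}\frac{0.2}{p(\log p)^4}$, a convergent sum over all primes beyond $p_{10^8}$ (we may discard the restriction to non-Mertens primes, since all terms are now nonnegative and this only enlarges the sum). For this I would use partial summation against the prime-counting function — or more cheaply, the bound $\sum_{p>N}\frac{1}{p(\log p)^4}\le\int_N^\infty\frac{dt}{t(\log t)^5}\cdot(1+o(1))$-type estimates made rigorous via a Chebyshev/Mertens-type upper bound $\sum_{p\le t}1/p\le \log\log t+B+o(1)$ together with partial summation — to reduce everything to an elementary integral. Concretely, with $u=\log t$, $\int_{\log p_{10^8}}^\infty 0.2\,u^{-4}\,du=\frac{0.2}{3(\log p_{10^8})^3}$, and since $\log p_{10^8}\approx\log(2.04\times10^9)\approx 21.44$, this is roughly $0.2/(3\cdot 9860)\approx 6.8\times10^{-6}$; the factor gained from $\sum_{p\le t}1/p$ growing only like $\log\log t$ (rather than $t/\log t$ terms each of size $1/(t(\log t)^4)$) is what pulls the final constant down below $2.37\times10^{-7}$. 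I would carry this out carefully with an explicit effective form of Mertens' theorem (again from \cite{RS1} or \cite{dusart}) so that the tail estimate is fully rigorous, then combine with the finite computational check for $p\le p_{10^8}$ (there are no terms there) to conclude $\epsilon_0<2.37\times10^{-7}$.

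The main obstacle is purely quantitative bookkeeping: one must be honest about the passage from the sum over primes $\sum_{p>p_{10^8}}\frac{1}{p(\log p)^4}$ to an integral, since a naive comparison loses a factor of roughly $\log p_{10^8}$ and would give a bound near $10^{-5}$ rather than the claimed $2.4\times10^{-7}$. The right move is partial summation: writing $\theta^*(t)=\sum_{p\le t}1/p$, we have $\sum_{p>N}\frac{1}{p(\log p)^4}=\int_N^\infty\frac{d\theta^*(t)}{(\log t)^4}$, and since $\theta^*(t)=\log\log t+M+O(1/\log t)$ with $M$ the Mertens constant, integrating by parts converts this into $\int_N^\infty\frac{4\,dt}{t(\log t)^5}+(\text{boundary})$, i.e. one genuinely gains the extra $\log t$. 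Making the $O(1/\log t)$ error term effective for $t\ge p_{10^8}$ — which \cite{dusart} supplies — and tracking the $0.2$ constant through this, the final arithmetic comfortably clears the stated bound. I would present the effective Mertens input as a quoted inequality and then let the integral estimate do the rest.
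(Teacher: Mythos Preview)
Your argument tracks the paper's exactly through the termwise bound: nonnegativity is trivial, there are no odd non-Mertens primes below $p_{10^8}$, and \eqref{eq:D} gives $e^\gamma g(p)-f(p)\le 0.2/(p(\log p)^4)$ for each remaining term. The only divergence is in how the tail $\sum_{p>p_{10^8}}0.2/(p(\log p)^4)$ is estimated. The paper indexes by $n$ and feeds in Dusart's explicit lower bound $p_n>n(\log n+\log\log n-1+(\log\log n-2.1)/\log n)$, then sums over $n>10^8$; you instead propose partial summation against $\theta^*(t)=\sum_{p\le t}1/p$ and effective Mertens. Both are standard and both work.

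Two cautions, though. First, your integration-by-parts line is garbled: integrating $\int_N^\infty (\log t)^{-4}\,d\theta^*(t)$ by parts leaves $\theta^*(t)$ (or better, $S(t)=\theta^*(t)-\theta^*(N)$) in the integrand, not a bare $4\,dt/(t(\log t)^5)$; the extra power of $\log t$ you need comes from $S(t)\approx\log\log t-\log\log N$ after you substitute, not from the parts step itself. Second, the bound is not ``comfortable'': the main term of your method is $0.2/(4(\log p_{10^8})^4)\approx 2.369\times10^{-7}$, essentially saturating the stated $2.37\times10^{-7}$, so the effective error terms from Mertens must be tracked carefully rather than absorbed. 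The paper's $p_n$-based sum has the same tightness, which is presumably why the constant $2.37\times10^{-7}$ was chosen in the first place.
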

\begin{proof}
By the definition of $\PP^{\rm Mert}$, the summands in the definition of $\epsilon_0$ are nonnegative,
so that $\epsilon_0\ge0$.    If $p>2$ is not Mertens, then $p>p_{10^8}>2\times10^9$, so that \eqref{eq:D} shows that
\begin{equation}
\label{eq:nonM}
e^\gamma g(p)-f(p)<\frac1{5p(\log p)^4}.
\end{equation}
By \cite[Proposition 5.16]{dusart}, we have
$$
p_n  >n(\log n+\log\log n-1 +(\log\log n-2.1)/\log n,\quad n\ge 2.
$$
Using this we find that 
$$
\sum_{n>10^8}\frac1{5p_n(\log p_n)^4}<2.37\times10^{-7},
$$
which with \eqref{eq:nonM} completes the proof.
\end{proof}

\begin{remark}
Clearly, a smaller bound for $\epsilon_0$ would follow by raising the search limit for Mertens primes.
Another small improvement could be made using the estimate in \cite{axler} for $p_n$. 
It follows from the ideas in Remark \ref{rmk:martin} that $\epsilon_0>0$.  Further, it may be provable
 from the ideas in
Remark \ref{rmk:ford} that $\epsilon_0<10^{-100}$ if the Riemann Hypothesis holds.
\end{remark}

We have the following result.
\begin{theorem}
\label{thm:odd}
For any odd primitive set $A$, we have
\begin{align}
\label{eq:odd}
f(A)  \le f(\PP(A))+\epsilon_0.
\end{align}
\end{theorem}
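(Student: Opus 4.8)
The plan is to decompose an odd primitive set $A$ according to the least prime factor of its members and to treat each block $A'_p$ separately, extracting a saving at every prime $p \in \PP(A)$ that is \emph{not} Mertens. Write $f(A) = \sum_{p} f(A'_p)$, where $p$ ranges over $\PP(A)$ together with those primes $p\in\PP(A)$ for which $A'_p = \{p\}$. For each prime $p$ we invoke Proposition~\ref{lem:erdos} (via Corollary~\ref{cor:mert}): if $p$ is a Mertens prime then $f(A'_p)\le f(p)$, while in general $f(A'_p) \le \max\{e^\gamma g(p), f(p)\}$. So for non-Mertens $p$ we only know $f(A'_p) < e^\gamma g(p) = f(p) + \big(e^\gamma g(p) - f(p)\big)$, and the excess is exactly a summand of $\epsilon_0$.

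The key steps, in order, are: (1) split $\PP(A) = \big(\PP(A)\cap\PP^{\rm Mert}\big) \cup \big(\PP(A)\setminus\PP^{\rm Mert}\big)$, noting that $2\notin\PP(A)$ since $A$ is odd; (2) bound $f(A'_p)\le f(p)$ for the Mertens primes in $\PP(A)$ using Corollary~\ref{cor:mert}; (3) bound $f(A'_p) < e^\gamma g(p)$ for each non-Mertens prime $p\in\PP(A)$ using Proposition~\ref{lem:erdos} (if $A'_p=\{p\}$ the bound $f(A'_p)=f(p)$ is even better, but the cruder bound suffices); (4) sum these, getting
\begin{align*}
f(A) = \sum_{p\in\PP(A)} f(A'_p)
&\le \sum_{p\in\PP(A)\cap\PP^{\rm Mert}} f(p) + \sum_{\substack{p\in\PP(A)\\ p\notin\PP^{\rm Mert}}} e^\gamma g(p)\\
&= \sum_{p\in\PP(A)} f(p) + \sum_{\substack{p\in\PP(A)\\ p\notin\PP^{\rm Mert}}} \big(e^\gamma g(p) - f(p)\big);
\end{align*}
(5) observe that the final sum is bounded above by the sum over \emph{all} odd non-Mertens primes, which is precisely $\epsilon_0$, since every summand $e^\gamma g(p) - f(p)$ is nonnegative by the definition of a Mertens prime (this nonnegativity is exactly what makes dropping the restriction to $\PP(A)$ legitimate). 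This gives $f(A) \le f(\PP(A)) + \epsilon_0$.

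I do not anticipate a genuine obstacle here: the argument is a bookkeeping reorganization of Corollary~\ref{cor:mert} and Proposition~\ref{lem:erdos}, with the one substantive point being that the per-prime error terms are sign-definite, so enlarging the index set from $\PP(A)$ to all odd non-Mertens primes can only increase the bound. The mild subtlety to handle carefully is the role of primes $p$ with $A'_p=\{p\}$ (i.e.\ $p\in A$): for these, $f(A'_p)=f(p)$ directly, so they contribute nothing to the error regardless of whether $p$ is Mertens, and one should phrase step~(3) so that it covers this case cleanly (Proposition~\ref{lem:erdos} applies when $p\notin A$, and when $p\in A$ one uses $f(A'_p)=f(p)$ outright). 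Quantitatively, combining with Lemma~\ref{lem:eps} yields $f(A)\le f(\PP(A)) + 2.37\times 10^{-7}$, which is the numerical form recorded in Theorem~\ref{thm:no8s} for the odd part; Theorem~\ref{thm:odd} as stated keeps the cleaner symbolic constant $\epsilon_0$.
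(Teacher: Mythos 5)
Your proposal is correct and follows essentially the same route as the paper: decompose $f(A)$ over least prime factors, bound $f(A'_p)\le f(p)$ at Mertens primes and $f(A'_p)\le e^\gamma g(p)$ at non-Mertens primes via Proposition~\ref{lem:erdos}, and enlarge the set of non-Mertens primes from those in $\PP(A)$ to all odd non-Mertens primes using nonnegativity of the summands $e^\gamma g(p)-f(p)$. Your extra care about the case $A'_p=\{p\}$ (where Proposition~\ref{lem:erdos} technically needs $p\notin A$, but $f(A'_p)=f(p)<e^\gamma g(p)$ holds anyway for non-Mertens $p$) is a clean touch that the paper glosses over but does not change the argument.
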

\begin{proof}
Assume that $A$ is an odd primitive set.  We have
$$
f(A)=\sum_{p\in\PP(A)}f(A'_p)\le\sum_{p\in\PP(A)\cap\PP^{\rm Mert}}f(p)+\sum_{p\in\PP(A)\setminus\PP^{\rm Mert}}e^\gamma g(p)
\le\epsilon_0+\sum_{p\in\PP(A)}f(p)
$$
by the definition of $\epsilon_0$.  This completes the proof.
\end{proof}

This theorem yields the following corollary.
\begin{corollary}\label{cor:8}
If $A$ is a primitive set containing no multiple of $8$, then \eqref{eq:odd} holds.
\end{corollary}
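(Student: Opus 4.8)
The plan is to reduce the case of a primitive set $A$ with no multiple of $8$ to the purely odd case handled in Theorem \ref{thm:odd}. Write $A = A'_2 \cup A_3$ where $A'_2$ is the set of even elements and $A_3$ is the set of odd elements; since $A$ has no multiple of $8$, every even element of $A$ is exactly $2m$ or $4m$ with $m$ odd, so $A'_2 = 2B^1 \cup 4B^2$ in the notation of the paper, where $B^1, B^2$ are sets of odd numbers. First I would observe that $B := B^1 \cup B^2 \cup A_3$ need not be primitive, but $B^1$, $B^2$, and $A_3$ are each primitive, and more usefully the sets $(B^1)'_p, (B^2)'_p, (A_3)'_p$ can be recombined. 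The cleaner route: split $f(A) = f(A_3) + f(2B^1) + f(4B^2)$ and estimate each piece by applying Proposition \ref{lem:erdos} / Corollary \ref{cor:mert} prime-by-prime, exactly as in the proof of Theorem \ref{thm:odd}, but now tracking the extra factors $1/2$ and $1/4$ coming from the powers of $2$.

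The key steps, in order, would be: (1) For each odd prime $p$, note that $(B^1)'_p$ and $(B^2)'_p$ are primitive sets of odd numbers (either a singleton $\{p\}$ or a set of odd composites), so Proposition \ref{lem:erdos} gives $f\big((B^1)'_p\big), f\big((B^2)'_p\big) \le \max\{f(p), e^\gamma g(p)\}$, and when $p$ is Mertens this max is $f(p)$. (2) Because $2p$ or $4p$ lies below the corresponding odd prime's "weight", one checks $f(2p) \le \tfrac12 f(p)$ and $f(4p) \le \tfrac14 f(p)$ and similarly $e^\gamma g(p)$ scales, using \eqref{eq:mert} as in Lemma \ref{lem:egamma}; this shows $f\big(2(B^1)'_p\big) \le \tfrac12\max\{f(p), e^\gamma g(p)\}$ and $f\big(4(B^2)'_p\big) \le \tfrac14\max\{f(p), e^\gamma g(p)\}$. (3) Sum over odd primes $p \in \PP(A)$: the contribution from $A_3$ is at most $\sum_{p\in\PP(A)} f(A'_p \cap A_3) \le \sum f(p) + \epsilon_0$-type bound, and the even contributions add at most $(\tfrac12 + \tfrac14)$ times a similar sum. (4) Crucially, if $p \in \PP(A)$ appears as the least prime factor of an odd element of $A$, it cannot also be that $p \in A$ (primitivity), and we must make sure we are not double-counting the prime $p$ itself on the right side: the right-hand side of \eqref{eq:odd} only has one copy of $f(p)$ per $p \in \PP(A)$. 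So the real content is checking that the three weighted sums $f(A_3) + f(2B^1) + f(4B^2)$, after applying the prime-by-prime bound, telescope into $\sum_{p \in \PP(A)} f(p) + \epsilon_0$ rather than something larger like $\tfrac74\sum f(p)$.

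This is exactly where the argument of Theorem \ref{thm:egamma} and the identity \eqref{eq:ident} come back in: the "$2e^\gamma \sum_{p>2} g(p) = e^\gamma$" bookkeeping there is the prototype, and here the bound $C + \epsilon_0$ replaces $e^\gamma$. Concretely, for a fixed odd prime $p$, the total $A$-mass with least odd prime factor $p$ — coming from $A_3$, from $2B^1$, and from $4B^2$ — consists of disjoint primitive pieces whose "$S_a$"-sets (as in Proposition \ref{lem:erdos}, but now allowing a factor $2$ or $4$ in front) are still disjoint and still sit inside the density-$g(p)$ (resp. $\tfrac12 g(p)$, $\tfrac14 g(p)$ after dividing out the $2$ or $4$) set of integers with least odd prime factor $p$. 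Summing the density inequality gives $g(A'_p \cap A_3) + \tfrac12 \cdot(\text{stuff}) \le g(p)$, and then the Mertens/$\epsilon_0$ comparison $e^\gamma g(p) \le f(p) + (e^\gamma g(p) - f(p))^+$ converts this to the $f$-statement, with the excesses summing to at most $\epsilon_0$.

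The main obstacle I expect is the disjointness/packing bookkeeping in step (4): one has to set up the sets $S_a$ for the even elements carefully so that $\{2a : a \in (B^1)'_p\} \cup \{4a : a \in (B^2)'_p\} \cup (A_3)'_p$ and their "upward closures" remain pairwise disjoint and fit inside the set of $n$ with $p(n/2^{v_2(n)}) = p$ and $v_2(n) \le 2$, whose density is $(1 + \tfrac12 + \tfrac14) g(p) = \tfrac74 g(p)$ — and then one sees the coefficient is $\tfrac74$, not $1$, so a naive packing argument gives $f(A) \le \tfrac74 e^\gamma \sum_{p>2} g(p) \cdot (\text{something})$, which is too weak. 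The resolution, following the Theorem \ref{thm:egamma} template, is that the weighted pieces must be packed with their weights: $S_a$ for $a \in (B^k)'_p$ should be $\{2^k a b : p(b) \ge P(a)\}$, which has density $2^{-k} g(a)$, and these across $k = 0, 1, 2$ are disjoint and their densities sum to at most $g(p) \cdot (1 + \tfrac12 + \tfrac14)^{-1}\cdot(\dots)$ — no: rather, since $A$ is primitive, the full collection over $k=0,1,2$ and over all $a$ has density sum at most the density of $\{n : p \mid n,\ n \text{ odd part has least factor} \ge p\}$ appropriately, and the correct inequality to extract is $\sum_{k} \sum_{a \in (B^k)'_p} 2^{-k} g(a) \le g(p)$, giving the coefficient $1$. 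Verifying this clean inequality — that primitivity of $A$ forces the weighted density sum to be $\le g(p)$ even with the $2^{-k}$ weights — is the crux, and it follows because multiplying an element by $2^k$ and closing upward under primes $\ge P(a)$ produces sets that, after removing the power of $2$, are still disjoint subsets of the density-$g(p)$ set, while the $2^{-k}$ is exactly the density lost to the fixed power $2^k$. Once that inequality is in hand, the passage to $f(A) \le f(\PP(A)) + \epsilon_0$ via Lemma \ref{lem:eps} and inequality \eqref{eq:mert} is routine and parallels Theorem \ref{thm:odd} verbatim.
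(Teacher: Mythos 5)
Your plan diverges from the paper's, and the key step you flag as ``the crux'' is in fact false. You propose to prove the weighted packing inequality
\[
\sum_{k=0}^{2}\ \sum_{a\in (B^k)'_p} 2^{-k}g(a)\ \le\ g(p),
\]
by arguing that ``after removing the power of $2$,'' the sets $S_a=\{ac:p(c)\ge P(a)\}$ for $a\in(B^k)'_p$ remain pairwise disjoint across different $k$. They do not: primitivity of $A$ only controls divisibility among the \emph{un}-de-$2$'d elements $2^k a$, not among the odd parts $a$. Concretely, take $p=3$ and
\[
A \ =\ \{6,\,9\}\cup\{3q:q\ge5\ \text{prime}\},
\]
which is primitive with no multiple of $8$. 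Here $(B^0)'_3=\{9\}\cup\{3q:q\ge5\}$, $(B^1)'_3=\{3\}$, $(B^2)'_3=\emptyset$, and $S_9\cap S_3\neq\emptyset$ (e.g.\ $9\in$ both). Computing the weighted densities, $g\big((B^0)'_3\big)=g(9)+\tfrac13\sum_{q\ge5}g(q)=\tfrac1{18}+\tfrac19=\tfrac16=g(3)$, while $\tfrac12 g\big((B^1)'_3\big)=\tfrac12 g(3)$, so the left side equals $\tfrac32 g(3)>g(3)$. The only inequality that the disjointness argument honestly yields is the one with coefficient $1+\tfrac12+\tfrac14=\tfrac74$ on the right, which you already identify as too weak. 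There is no clean coefficient-$1$ packing here, and trying to force one is the wrong direction.

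The paper's actual proof sidesteps all of this with a short numerical margin argument: write $f(A)=f(A_3)+f(A'_2)$, apply Theorem~\ref{thm:odd} to the odd primitive set $A_3$, and bound $f(A'_2)$ by $\tfrac12 f(B^1)+\tfrac14 f(B^2)$ (handling $2\in A$ or $4\in A$ separately), where $B^1,B^2$ are odd primitive sets. Applying Theorem~\ref{thm:odd} to each gives $f(A'_2)\le \tfrac34\big(f(\PP\setminus\{2\})+\epsilon_0\big)<0.69<f(2)=0.7213\dots$, so the even part contributes strictly less than the single term $f(2)$ that appears in $f(\PP(A))$. No packing refinement is needed; the slack in $f(2)$ versus $\tfrac34\big(C-f(2)\big)$ absorbs everything. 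You should replace your step (4) with this comparison rather than attempting to sharpen the density bound.
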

\begin{proof}
We have seen the corollary in the case that $A$ is odd.
  Next, suppose that $A$ contains an even number, but no multiple of 4.
If $2\in A$, the result follows by applying Theorem \ref{thm:odd} to $A\setminus\{2\}$, so assume
$2\notin A$.  Then $A''_2$ is an odd primitive set and $f(A'_2)\le f(A''_2)/2$.  
We have by the odd case that
\begin{equation}
\label{eq:2mod4}
f(A)=f(A_3)+f(A'_2)< f(\PP(A_3))+\epsilon_0+\frac12\left(f(\PP(A''_2))+\epsilon_0\right).
%&\le\frac32\epsilon_0+\frac32f(\PP(A)\setminus\{2\}).
\end{equation}
Since 
$$
\frac12f(\PP(A''_2))\le\frac12f(\PP\setminus\{2\})<0.4577
$$
and $f(2)=0.7213\dots$, \eqref{eq:2mod4} and Lemma \ref{lem:eps} imply that $f(A)<f(\PP(A))$,
which is stronger than required.  The case when $A$ contains a multiple of 4
but no multiple of 8 follows in a similar fashion.
\end{proof}
Since a cube-free number cannot be divisible by 8,  \eqref{eq:odd} holds for all primitive sets $A$ of cube-free numbers.  Also, the proof of Corollary \ref{cor:8} can be adapted to show that \eqref{eq:odd} holds
for all primitive sets $A$ containing no number that is 4~(mod~8).

We close out this section with a curious result about those primitive sets $A$ where \eqref{eq:odd}
does not hold.
Namely, the Erd\H os conjecture must then hold for the set of odd members of $A$.
 Put another way, \eqref{eq:odd} holds for any primitive set $A$ for
which the Erd\H os conjecture for the odd members of $A$ {\it fails}.
\begin{theorem}
\label{thm:curious}
If $A$ is a primitive set with $f(A)> f(\PP(A))+\epsilon_0$, then $f(A_3)<f(\PP(A_3))$.
\end{theorem}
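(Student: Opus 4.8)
\textbf{Proof proposal for Theorem \ref{thm:curious}.}

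The plan is to argue by contraposition: assuming $f(A_3)\ge f(\PP(A_3))$, I will derive $f(A)\le f(\PP(A))+\epsilon_0$. The natural decomposition is $f(A)=f(A_3)+f(A'_2)$ and correspondingly $f(\PP(A))=f(\PP(A_3))+f(A'_2\cap\{2\})$, where the second term is either $f(2)$ or $0$ depending on whether $2\in A$. So it suffices to control $f(A'_2)$ against $f(\PP(A))-f(\PP(A_3))+\epsilon_0$ while carrying along the assumed inequality for $A_3$; but in fact the assumption $f(A_3)\ge f(\PP(A_3))$ is not what helps here — rather, I expect the theorem is proved by showing unconditionally that $f(A'_2)$ alone is already small enough that \eqref{eq:odd} holds \emph{provided} the odd part does not exceed its prime bound by too much. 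Let me restructure: write $f(A)-f(\PP(A)) = \big(f(A_3)-f(\PP(A_3))\big) + \big(f(A'_2)-f(\{2\}\cap A)\big)$. The hypothesis of the theorem is that the left side exceeds $\epsilon_0$; I want to show this forces $f(A_3)-f(\PP(A_3))<0$. So it is enough to prove that the second bracket, $f(A'_2)-f(\{2\}\cap A)$, is always $\le\epsilon_0$ — then if $f(A_3)-f(\PP(A_3))\ge 0$ the whole expression is $\le\epsilon_0$, contradicting the hypothesis.

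Thus the crux reduces to the single inequality
\begin{equation*}
f(A'_2)\le \epsilon_0 + f(\{2\}\cap A).
\end{equation*}
I would handle this by cases on the power of $2$ dividing members of $A$, mirroring Corollary \ref{cor:8} but now across \emph{all} powers. If $2\in A$ then $A'_2=\{2\}$ and the inequality is trivial. Otherwise $2\notin A$, and I decompose $A'_2=\bigcup_{k\ge1}A^k$ with $B^k=\{a/2^k:a\in A^k\}$ an odd primitive set. The key point: each $(B^k)'_p$ with $2^kp\notin A$ is an odd \emph{composite} primitive set, so Proposition \ref{lem:erdos}/Corollary \ref{cor:mert} applies, and summing over Mertens vs non-Mertens primes exactly as in Theorem \ref{thm:odd} gives $f(B^k)\le f(\PP(B^k)) + \epsilon_0$ (with a correction when some $2^kp\in A$, handled as in Lemma \ref{lem:egamma}). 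Since $a=2^k a'$ with $a'\in B^k$ satisfies $f(a)=1/(2^k a'\log(2^k a'))\le 2^{-k} f(a')$, we get $f(A^k)\le 2^{-k}f(B^k)$. The wasteful step would be summing $\sum_k 2^{-k}\epsilon_0 = \epsilon_0$ for the error, which is fine, but the main term $\sum_k 2^{-k}f(\PP(B^k))$ must be bounded by $\epsilon_0 + f(\{2\}\cap A)=\epsilon_0$ — and that is false in general (e.g. $f(\PP(B^1))$ could be close to $C$).

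So the genuine difficulty, and where I expect the real work lies, is a \emph{disjointness/density} argument showing that the sets $\PP(B^k)$ cannot all be large: because $A$ is primitive, if $2^k p\in A$ then no $2^j p$ with $j>k$ lies in $A$, which limits overlaps, but more importantly one should run the $g$-density bound across \emph{all} powers of $2$ simultaneously. Concretely, I would revisit the proof of Lemma \ref{lem:egamma}: there it is shown $f(A^k)<\frac{e^\gamma}{2^k}\sum_{p\notin A,\,p>2}g(p)$, and summing over $k\ge1$ gives $f(A'_2)<e^\gamma\sum_{p\notin A,\,p>2}g(p)\le e^\gamma\cdot\frac12 = e^\gamma/2\approx 0.89$ — far too weak. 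The improvement must come from replacing $e^\gamma g(p)$ by $f(p)$ for Mertens primes $p$ (Corollary \ref{cor:mert}), which is legitimate since the relevant sets are odd and composite, yielding $f(A'_2)\le \sum_{p\notin A,\,p>2}\big(\text{Mertens: }f(p)\text{; else: }e^\gamma g(p)\big)\le \sum_{p>2}f(p)+\epsilon_0$. But $\sum_{p>2}f(p)=C-f(2)\approx 0.915$, still far exceeding $\epsilon_0$ — so this line cannot work either, and I must be misreading the intended decomposition. The resolution, I suspect, is that the theorem's conclusion $f(A_3)<f(\PP(A_3))$ is \emph{strict} and weak, so actually one proves: either $f(A_3)<f(\PP(A_3))$ outright, or else the hypothesis $f(A)>f(\PP(A))+\epsilon_0$ fails because Theorem \ref{thm:odd} applied appropriately (to $A_3$ as an odd set when $2\notin\PP(A)$, or to $A\setminus\{2\}$ when $2\in A$) already yields \eqref{eq:odd}. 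I would therefore finish by splitting on whether $2\mid P(a)$ is ever the issue and invoking Theorem \ref{thm:odd} on the odd subset $A_3$ directly: $f(A_3)\le f(\PP(A_3))+\epsilon_0$ always, and to upgrade the ``$+\epsilon_0$'' to strict ``$<$'' one uses that if equality-ish held then $A'_2$ would have to contribute, but the hypothesis already allocated all the slack to $A$; making this trade-off precise — showing the $\epsilon_0$ budget cannot be spent twice — is the main obstacle and the part requiring care.
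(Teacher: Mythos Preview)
Your proposal does not reach a proof, and you correctly diagnose why each of your attempts fails. The ``crux inequality'' you isolate, namely $f(A'_2)\le\epsilon_0+f(\{2\}\cap A)$, is not just hard but \emph{false}: for the primitive set $A=\{4\}$ one has $f(A'_2)=1/(4\log 4)>0.18$ while the right side is $\epsilon_0<2.4\times10^{-7}$. So the reduction that discards the assumption on $A_3$ cannot work, and the later attempts to recover via Lemma~\ref{lem:egamma} or Corollary~\ref{cor:mert} inevitably produce bounds of size $\sim C-f(2)$ or $e^\gamma/2$, not $\epsilon_0$. (A small side issue: your displayed identity $f(A)-f(\PP(A))=(f(A_3)-f(\PP(A_3)))+(f(A'_2)-f(\{2\}\cap A))$ is not generally an equality, since an odd prime may lie in $\PP(A)\setminus\PP(A_3)$ by dividing only even members of $A$.)

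The paper's argument is of an entirely different flavor: it is \emph{structural and numerical}, not a tight $\epsilon_0$-accounting. First reduce (by adjoining the primes outside $\PP(A)$) to $\PP(A)=\PP$, so the hypothesis reads $f(A)>C+\epsilon_0$. Theorem~\ref{thm:odd} forces $A$ to have even members, and Corollary~\ref{cor:8} forces $2\notin A$. Now feed the extra information ``$3\in A$'' into the proof of Theorem~\ref{thm:egamma}: in \eqref{eq:fA}--\eqref{eq:fAK} one may pull $f(3)$ out of $\sum_{p\in A}f(p)$ and delete $g(3)=1/6$ from $\sum_{p>2}g(p)=1/2$, yielding $f(A)<f(3)+2e^\gamma(1/2-1/6)=f(3)+\tfrac{2}{3}e^\gamma<C$, contradicting the hypothesis. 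Hence $3\notin A$. Finally, rerun the entire Theorem~\ref{thm:egamma} machinery on the odd set $A_3$, with the prime $3$ playing the role that $2$ played before (so $\sum_{k\ge1}3^{-k}=1/2$ replaces $\sum_{k\ge1}2^{-k}=1$, and $\sum_{p>3}g(p)=1/3$ replaces $\sum_{p>2}g(p)=1/2$); this yields $f(A_3)<\tfrac{3}{2}e^\gamma\cdot\tfrac{1}{3}=\tfrac{1}{2}e^\gamma$. Since $\tfrac{1}{2}e^\gamma\approx0.8906<C-f(2)\approx0.9153$, the strict inequality $f(A_3)<f(\PP(A_3))$ follows. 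The point you missed is that one should use the hypothesis $f(A)>f(\PP(A))+\epsilon_0$ only to extract the constraints $2,3\notin A$, and then exploit the resulting slack via the crude $e^\gamma$-method rather than chase an $\epsilon_0$-level bound.
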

\begin{proof}[Proof (Sketch)]
Without loss of generality, we may include in $A$ all primes not in $\PP(A)$, and so assume that $\PP(A)=\PP$
and $f(A)>C+\epsilon_0$.
By Theorem \ref{thm:odd} we may assume that $A$ is not odd, and by Corollary \ref{cor:8}
we may assume that $2\notin A$.  By the proof of Theorem \ref{thm:egamma} (see \eqref{eq:fA} and 
\eqref{eq:fAK}), if $3\in A$, we have
$$
f(A)<f(3)+\frac23e^\gamma<C,
$$
a contradiction, so we may assume that $3\notin A$.  We now apply the method of proof of Theorem \ref{thm:egamma}
to $A_3$, where powers of 3 replace powers of 2.  This leads to
$$
f(A_3)<\frac12e^\gamma<C-f(2)=f(\PP(A_3)).
$$
This completes the argument.
\end{proof}

\section{Zhang primes and the Banks--Martin conjecture}

Note that
$$
\sum_{p\ge x}\frac1{p\log p}\sim\frac1{\log x},\quad x\to\infty.
$$
In Erd\H os--Zhang \cite{ez} and in Zhang \cite{zhang2}, numerical approximations
to this asymptotic relation are exploited.
Say a prime $q$  is {\bf Zhang} if 
$$\sum_{p\ge q}\frac{1}{p\log p} \le \frac{1}{\log q}.$$
Let $\mathcal P^{\textrm{Zh}}$ denote the set of Zhang primes. We are interested in Zhang primes because of the following result.

\begin{theorem}\label{thm:zhang}
If $\mathcal P(A'_p)\subset \mathcal P^{\textrm{Zh}}$, then $f(A'_p) \le f(p)$. 
Hence the Erd\H os conjecture holds for all primitive sets $A$ supported on $\mathcal P^{\textrm{Zh}}$.
\end{theorem}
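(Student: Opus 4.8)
The plan is to mimic the density argument behind Proposition~\ref{lem:erdos}, but weighting each element $a\in A'_p$ by the reciprocal-sum tail rather than by Mertens' product. First I would fix a prime $p$ and a primitive set $A$ with $\PP(A'_p)\subset\PP^{\rm Zh}$. For each $a\in A'_p$ write $Q(a)=P(a)$ and associate to $a$ the ``box'' of integers of the form $ab$ with $p(b)\ge P(a)$; as in the proof of Proposition~\ref{lem:erdos}, primitivity of $A'_p$ makes these boxes pairwise disjoint, and each such $ab$ has least prime factor exactly $p$. The new ingredient is that instead of counting the density of the box, I assign it the weight $\sum_{b:\,p(b)\ge P(a)} 1/(ab\log(ab))$; intuitively this should be comparable to $1/(a)\cdot\big(\text{tail of }\sum 1/(r\log r)\text{ past }P(a)\big)$, hence to $h$-type quantities.

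The cleaner route, which I would actually pursue, is a direct induction on $\Omega$ of the largest element, paralleling Zhang's original argument. Order $A'_p$ and peel off: for each $a\in A'_p$, the set $\{a\}\cup\{$odd multiples of $a$ by primes $\ge P(a)$ that lie in $A\}$ is itself primitive, so one reduces to the statement for the ``quotient'' set $A''$ obtained by stripping the smallest prime factor. Concretely, I would prove by induction on $k$ that if every element of a primitive set $B$ with $p(b)\ge q$ for all $b\in B$ satisfies $P(b)\in\PP^{\rm Zh}$ and $\Omega(b)\le k$, then $f(B)\le f(q)$. The base case $k=1$ is either $B\subset\{q\}$ (trivial) or uses the Zhang condition on $q$ directly: $f(B)\le\sum_{r\ge q}1/(r\log r)\le 1/\log q$ — wait, that over-counts, so instead the base case $B=\{q\}$ is immediate. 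For the inductive step, partition $B$ by smallest prime factor $r\ge q$: the elements equal to $r$ contribute $f(r)$, and for the composite elements with $p(b)=r$, dividing out $r$ yields a primitive set of elements with $p\ge r$, $\Omega\le k-1$, and supports still in $\PP^{\rm Zh}$; by induction its $f$-sum with the factor $1/r$ (adjusting the log from $\log(rb)$ down to $\log b$, which only helps) is at most $f(r)$ or rather $\le \frac{1}{r}\cdot\frac{1}{\log r}$...

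Let me state the clean claim I expect the paper proves: for a prime $q\in\PP^{\rm Zh}$ and any primitive $B$ with $p(b)\ge q$ for all $b\in B$, one has $f(B)\le f(q)$. Summing over $q=p(a)$ for $a$ ranging over the distinct values of $p(\cdot)$ on $A'_p$ does not quite give the theorem directly, so the actual argument fixes $p$, notes $A'_p$ has all elements with $p(a)=p$, and instead runs the induction ``from the top'': replace each $a$ by $a$ together with its $A$-multiples, use that $\sum_{r\ge P(a)}1/(r\log r)\le 1/\log P(a)$ since $P(a)\in\PP^{\rm Zh}$, and telescope. The key inequality at each stage is: if $c\in A'_p$ has $P(c)=s\in\PP^{\rm Zh}$, then $\sum_{a\in A'_p,\,a\equiv 0\,(c),\,a/c\ \text{prime}\ \ge s}f(a)\le \frac{1}{c\log c}\cdot\big(\log c/\text{something}\big)$, bounded using $\log(a)\ge\log(cs)\ge\log c+\log s$ together with $\sum_{r\ge s}1/(r\log r)\le 1/\log s$.

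\textbf{Main obstacle.} The delicate point is the bookkeeping of the logarithms: in the box/quotient reduction one replaces $\log a$ by $\log(a/p(a))$, and one must verify these replacements always go in the favorable direction and that the telescoping sum of the Zhang tails $\sum_{r\ge P(a)}1/(r\log r)\le 1/\log P(a)$ collapses correctly to $1/\log p\cdot$(stuff)$\le f(p)$ after multiplying back the stripped prime factors. Making the induction hypothesis strong enough to close — specifically, carrying the condition $\PP(B)\subset\PP^{\rm Zh}$ through the quotient operation (which is automatic, since $P(b/p(b))\le P(b)$ and $\PP^{\rm Zh}$ is an up-set: if $q\in\PP^{\rm Zh}$ and $q'\ge q$ then $q'\in\PP^{\rm Zh}$ because the tail sum is decreasing and $1/\log q'\le 1/\log q$ — no wait, that direction is wrong too; one needs $q'\le q$). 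Determining whether $\PP^{\rm Zh}$ is an up-set or a down-set, and arranging the induction accordingly, is the real crux; I expect the paper handles it by inducting on $\Omega$ of the maximal element and invoking the Zhang inequality only at the largest prime factor of each element, where it is exactly what is needed.
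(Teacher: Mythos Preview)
Your instinct—induct on the maximum of $\Omega$ over the set, strip the least prime, and repeat—is exactly the paper's approach, but you have tied yourself in knots over a non-issue and thereby missed how cleanly the argument closes.

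The paper's proof runs as follows. Reduce to $A$ finite and induct on $d^\circ(A'_p):=\max_{a\in A'_p}\Omega(a)$. If $d^\circ(A'_p)\le 1$ then $A'_p\subset\{p\}$ and we are done. Otherwise $p\notin A'_p$ (by primitivity), every $a\in A'_p$ is composite, and since $\log a>\log(a/p)$ one has $f(A'_p)\le f(A''_p)/p$ where $A''_p=\{a/p:a\in A'_p\}$. Set $B=A''_p$; every element of $B$ has least prime $\ge p$, so $f(B)=\sum_{q\ge p}f(B'_q)$. Each $B'_q$ has $d^\circ(B'_q)<d^\circ(A'_p)$ and $\PP(B'_q)\subset\PP(A'_p)\subset\PP^{\rm Zh}$, so by induction $f(B'_q)\le f(q)$. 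Now apply the Zhang inequality at $p$ (not at $P(a)$!):
\[
f(A''_p)=\sum_{q\ge p}f(B'_q)\le\sum_{q\ge p}\frac1{q\log q}\le\frac1{\log p},
\]
whence $f(A'_p)\le f(A''_p)/p\le 1/(p\log p)=f(p)$.

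Your ``main obstacle'' is a phantom. You worried about whether $\PP^{\rm Zh}$ is an up-set or a down-set in order to propagate the support hypothesis to the quotient. No monotonicity is needed: dividing $a$ by $p$ cannot introduce any new prime into the support, so $\PP(A''_p)\subset\PP(A'_p)\subset\PP^{\rm Zh}$ is automatic. Likewise, the Zhang inequality is never invoked at $P(a)$; it is invoked once per inductive step at the current least prime $q$ (in particular at $p$ in the outermost step), and $q\in\PP(A'_p)$ guarantees $q$ is Zhang. The log bookkeeping you fret over is a single inequality, $\log a>\log(a/p)$, used once.
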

\begin{proof}
As in \cite{ez} it suffices to prove the theorem in the case that $A$ is a finite set.  By $d^\circ(A)$ we mean
the maximal value of $\Omega(a)$ for $a\in A$.
We proceed by induction on $d^\circ(A_p')$. If $d^\circ(A'_p)\le 1$, then $f(A_p') \le  f(p)$. If $d^\circ(A'_p)> 1$, then $f(A_p')\le f(A_p'')/p$. The primitive set $B:=A_p''$ satisfies $f(B)=f(B_p)=\sum_{q\ge p}f(B_q')$. Since $d^\circ(B_q') \le d^\circ(B) < d^\circ(A'_p)$, by induction we have $f(B_q') \le f(q)$. Thus, since $p$ is Zhang,
$$f(A_p'') = f(B)=\sum_{q\ge p}f(B_q')\le \sum_{q\ge p}\frac{1}{q\log q} \le \frac{1}{\log p},$$
from which we obtain $f(A_p')\le f(A_p'')/p \le 1/(p\log p)$. This completes the proof.
\end{proof}

 From this one might hope that all primes are Zhang. However, the prime 2 is not Zhang since $C> 1/\log 2$,
 and the prime 3 is not Zhang since $C-1/(2\log2)>1/\log3$. Nevertheless, as with Mertens primes, it is true that the remaining
primes up to $p_{10^8}$ are Zhang. Indeed, starting from \eqref{eq:cohen}, we
computed that
\begin{align}
\sum_{p\ge q}\frac1{p\log p} = C - \sum_{p < q}\frac{1}{p\log p} \le \frac{1}{\log q}\qquad\textrm{for all } 3< q \le p_{10^8}.
\end{align}
The computation stopped at $10^8$ for convenience, and one could likely extend this further with some patience.  It seems likely that there is also a ``race" between $\sum_{p\ge q}1/(p\log p)$
and $1/\log q$, as with Mertens primes, and that a large logarithmic density of primes $q$
are Zhang, with a small logarithmic density of primes failing to be Zhang.

%\section{The Banks--Martin conjecture}

A related conjecture due to Banks and Martin \cite{bm} is the chain of inequalities,
\begin{align*}
    \sum_{p}\frac{1}{p\log p} > \sum_{p\le q}\frac{1}{pq\log pq} > \sum_{p\le q\le r}\frac{1}{pqr\log pqr} > \cdots,
\end{align*}
succinctly written as $f(\N_k) > f(\N_{k+1})$ for all $k\ge1$, where 
$\N_k = \{n: \Omega(n) = k\}$.  As mentioned in the introduction, we know only that
$f(\N_1)> f(\N_k)$ for all $k\ge2$ and $f(\N_2)>f(\N_3)$.
More generally, for a subset $Q$ of primes, let $\N_k(Q)$ denote the subset of $\N_k$ supported on $Q$. A result of Zhang \cite{zhang2} impies that $f(\N_1(Q)) > f(\N_{k}(Q))$ for all $k>1$, while Banks and Martin showed that $f(\N_k(Q)) > f(\N_{k+1}(Q))$ if $\sum_{p\in Q}1/p$ is not too large.
We prove a similar result in the case where $Q$ is a subset of the Zhang primes and we
replace $f(\N_k(Q))$ with $h(\N_k(Q))$. Recall $h(A) = \sum_{a\in A}1/(a\log P(a))$.

\begin{proposition}
For all $k\ge1$ and $Q\subset \mathcal P^{\textrm{Zh}}$,
we have $h(\N_k(Q)) \ge h(\N_{k+1}(Q))$.
 \end{proposition}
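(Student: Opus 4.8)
The statement to prove is $h(\N_k(Q)) \ge h(\N_{k+1}(Q))$ for $Q \subset \PP^{\mathrm{Zh}}$. The natural approach mirrors the proof of Theorem \ref{thm:zhang}, peeling off the least prime factor of each element. Every $n \in \N_{k+1}(Q)$ can be written uniquely as $n = p m$ where $p = p(n) \in Q$ and $m \in \N_k(Q)$ has $p(m) \ge p$. So I would first write
$$
h(\N_{k+1}(Q)) = \sum_{p \in Q} \sum_{\substack{m \in \N_k(Q) \\ p(m) \ge p}} \frac{1}{pm \log P(pm)}
= \sum_{p \in Q} \frac{1}{p} \sum_{\substack{m \in \N_k(Q) \\ p(m) \ge p}} \frac{1}{m \log P(m)},
$$
using that $P(pm) = P(m)$ since $m$ has $k \ge 1$ prime factors all $\ge p$. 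The inner sum is $h$ applied to the "tail" set $\{m \in \N_k(Q) : p(m) \ge p\}$, which I would like to bound by something summing nicely against $1/p$.

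**Key steps.**
First I would handle the base case $k = 1$ separately (or note it follows from the general argument, since $\N_1(Q)$ consists of the primes in $Q$ and $h(q) = 1/(q\log q) = f(q)$, so $h(\N_1(Q)) = \sum_{q\in Q} f(q)$, while $h(\N_2(Q)) = \sum_{p\in Q}\frac1p \sum_{q \ge p, q\in Q} \frac{1}{q\log q} \le \sum_{p \in Q} \frac{1}{p}\cdot\frac{1}{\log p} = h(\N_1(Q))$ by the Zhang property). For the inductive step, the goal is to show the inner tail sum $\sum_{m \in \N_k(Q),\, p(m)\ge p} 1/(m\log P(m))$ is at most $1/\log p$. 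I would prove this by a second induction — on $k$ — establishing the sharper statement that for every prime $p$ and every $k \ge 1$,
$$
\sum_{\substack{m \in \N_k(Q) \\ p(m) \ge p}} \frac{1}{m \log P(m)} \le \frac{1}{\log p}.
$$
For $k=1$ this is exactly $\sum_{q \ge p,\, q \in Q} 1/(q\log q) \le 1/\log p$, the Zhang condition. For $k \ge 2$, split off $p(m) = q \ge p$ again: the left side becomes $\sum_{q \ge p,\, q\in Q} \frac1q \sum_{m' \in \N_{k-1}(Q),\, p(m')\ge q} 1/(m'\log P(m')) \le \sum_{q \ge p,\, q\in Q} \frac{1}{q\log q} \le \frac{1}{\log p}$ by the inductive hypothesis and then the Zhang property. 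Feeding this tail bound back into the displayed expression for $h(\N_{k+1}(Q))$ gives $h(\N_{k+1}(Q)) \le \sum_{p \in Q} \frac{1}{p\log p} = h(\N_1(Q))$, and more to the point comparing term by term with the analogous decomposition $h(\N_k(Q)) = \sum_{p\in Q}\frac1p\sum_{m\in\N_{k-1}(Q),\,p(m)\ge p} 1/(m\log P(m))$ (for $k\ge 2$) lets one conclude $h(\N_{k+1}(Q)) \le h(\N_k(Q))$ directly, since each inner sum for level $k+1$ is dominated by the corresponding inner sum for level $k$ by the tail bound applied at one level down. As in \cite{ez} and Theorem \ref{thm:zhang}, it suffices to treat finite $Q$ (or truncate), so all sums converge and the manipulations are justified.

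**Main obstacle.**
The one genuinely delicate point is the use of $\log P$ rather than $\log$ of the whole integer: the identity $P(pm) = P(m)$ requires $m$ to be a genuine product of $k \ge 1$ primes each $\ge p$, so that adjoining $p$ does not change the largest prime factor — this is why the statement is about $h$ and not $f$, and why the argument is cleaner than for $f$ where one would face $\log(pm)$ terms that do not telescope. I expect the bookkeeping of the nested induction (outer on the chain $k \to k+1$, inner establishing the uniform tail bound) to be the part most in need of care, but no single estimate should be hard: everything reduces to the defining inequality of Zhang primes applied repeatedly. I would also double-check the edge behavior when $Q$ contains a smallest prime, so that "$p(m) \ge p$" ranges are nonempty, and when $Q$ is finite so the recursion bottoms out.
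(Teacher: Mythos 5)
Your decomposition peels off the \emph{smallest} prime factor of each integer; the paper instead peels off the \emph{largest}, and this is where the argument becomes a one-liner. Writing each $a\in\N_{k+1}(Q)$ as $q_1\cdots q_{k+1}$ with $q_1\le\cdots\le q_{k+1}$, the weight $h(a)=1/(a\log P(a))$ has its logarithm factor attached to $q_{k+1}$, so summing over $q_{k+1}\ge q_k$ last gives $\sum_{q_{k+1}\ge q_k,\,q_{k+1}\in Q}1/(q_{k+1}\log q_{k+1})\le 1/\log q_k$ by the Zhang property of $q_k$, and $1/\log q_k$ is exactly the factor $h$ assigns to $q_1\cdots q_k$. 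Hence $h(\N_{k+1}(Q))\le h(\N_k(Q))$ immediately, with no induction. Your choice of peeling off the smallest prime is the natural one by analogy with Theorem \ref{thm:zhang}, but it decouples the peeled prime from the $\log P$ factor, forcing you to carry the tail sums $T_k(p)=\sum_{m\in\N_k(Q),\,p(m)\ge p}1/(m\log P(m))$ and argue inductively.

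Your uniform tail bound $T_k(p)\le 1/\log p$ is correctly established. But the final step has a gap as written: you claim ``each inner sum for level $k+1$ is dominated by the corresponding inner sum for level $k$ by the tail bound applied at one level down,'' i.e.\ $T_k(p)\le T_{k-1}(p)$. However, substituting the tail bound $T_{k-1}(q)\le 1/\log q$ into $T_k(p)=\sum_{q\ge p,\,q\in Q}\tfrac1q\,T_{k-1}(q)$ yields only $T_k(p)\le T_1(p)$, which is weaker than $T_k(p)\le T_{k-1}(p)$ (since $T_{k-1}(p)\le T_1(p)$ too). What you actually need is a separate monotonicity induction: $T_2(p)\le T_1(p)$ follows from $T_1(q)\le 1/\log q$, and then $T_k(p)=\sum_q\tfrac1q T_{k-1}(q)\le\sum_q\tfrac1q T_{k-2}(q)=T_{k-1}(p)$ by the inductive hypothesis $T_{k-1}\le T_{k-2}$. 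This is fine and completes your route, but it is not what your justification says, and the whole nested-induction apparatus is avoidable by peeling from the top as the paper does.
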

\begin{proof}
Since $p_k$ is a Zhang prime, we have
\begin{align*}
h(\N_{k+1}(Q)) & = \sum_{\substack{q_1\le \cdots\le q_{k+1}\\ q_i\in Q}}\frac{1}{q_1\cdots q_k q_{k+1}\log q_{k+1}} \\
& = \sum_{\substack{q_1\le \cdots\le q_{k}\\ q_i\in Q}}\frac{1}{q_1\cdots q_{k}}\sum_{q_{k+1}\ge q_{k}}\frac{1}{q_{k+1}\log q_{k+1}}\\
& \le \sum_{\substack{q_1\le \cdots\le q_{k}\\ q_i\in Q}}\frac{1}{q_1\cdots q_{k} \log q_k} = h(\N_{k}(Q)).
\end{align*}
This completes the proof.
\end{proof}

It is interesting that if we do not in some way restrict the primes used, the analogue 
of the Banks--Martin conjecture for the function $h$ fails.  In particular, we have
$$
h(\N_2)>\sum_{m\le 10^4}\frac1{p_m}\sum_{n\ge m}\frac1{p_n\log p_n}
=\sum_{m\le 10^4}\frac1{p_m}\left(C-\sum_{k<m}\frac1{p_k\log p_k}\right)
>1.638,
$$
while $h(\N_1)=C<1.637$.

It is also interesting that the analogue of the Banks--Martin conjecture for the function $g$ is
false since
$$
1=g(\N_1)=g(\N_2)=g(\N_3)=\cdots\,.
$$
We have already shown in \eqref{eq:g(A)} that
$g(A'_q)\le g(q)$ for any primitive set $A$ and prime $q$, so the analogue for $g$ of the strong Erd\H os conjecture holds.

\subsection{Proof of Theorem \ref{thm:Nk}.}

We now return to the function $f$ and prove Theorem \ref{thm:Nk}.

We may assume that $k$ is large.  Let $m=\lfloor\sqrt{k}\rfloor$ and let
$B(n)=e^{e^n}$.  We have
\begin{align*}
f(\N_k)&=\sum_{\Omega(a)=k}\frac1{a\log a} >\sum_{\substack{\Omega(a)=k\\
e^{e^{k}}<a\le e^{e^{k+m}}}}\frac1{a\log a}\\
&=\sum_{j\le m}\sum_{\substack{\Omega(a)=k\\B({k+j-1})<a\le
B(k+j)}}\frac1{a\log a} > 
\sum_{j\le m}\frac1{\log
B({k+j})}\sum_{\substack{\Omega(a)=k\\B(k+j-1)<a\le B({k+j})}}
\frac1a.
\end{align*}
Thus it suffices to show that there is a positive constant $c$ such that for $j\le m$ we have
\begin{equation}
\label{eq:ss}
\sum_{\substack{\Omega(a)=k\\B({k+j-1})<a\le B({k+j})}}\frac1a
\ge c\frac{\log B({k+j})}m=c\frac{e^{k+j}}m,
\end{equation}
so that the proposition will follow.

Let $N_k(x)$ denote the number of members of $\N_k$ in $[1,x]$.
We use the Sathe--Selberg theorem, see \cite[Theorem 7.19]{MV},
from which we have that uniformly for $B({k})< x\le B({k+m})$, as $k\to\infty$,
$$
N_k(x)\sim \frac x{k!}\frac{(\log\log x)^k}{\log x}.
$$
This result also follows from Erd\H os \cite{erdos48}.

We have
\begin{align*}
\sum_{\substack{\Omega(a)=k\\B({k+j-1})<a\le B({k+j})}}\frac1a
&>\int_{B({k+j-1})}^{B({k+j})}\frac{N_k(x)-N_k(B({k+j-1}))}{x^2}\,dx\\
&\gg \int_{2B({k+j-1})}^{B({k+j})}\frac{N_k(x)}{x^2}\,dx.
\end{align*}
Thus,
\begin{align*}
\sum_{\substack{\Omega(a)=k\\B({k+j-1})<a\le B({k+j} )}}\frac1a
&\gg \frac{(\log\log
B({k+j-1}))^k}{k!}\int_{2B({k+j-1})}^{B({k+j})}\frac{dx}{x\log x}\\
&=\frac{(k+j-1)^k}{k!}(\log\log B({k+j})-\log\log(2B({k+j-1}))\\
&\gg\frac{(k+j-1)^k}{k!}\gg\frac{e^{k+j}}{\sqrt{k}},
\end{align*}
the last estimate following from Stirling's formula.  This proves
\eqref{eq:ss},
and so the theorem.

The sets $\N_k$ and Theorem \ref{thm:Nk} give us the following result.
\begin{corollary}
\label{cor:largex}
We have that
$$
\limsup_{x\to\infty}\{f(A):A\subset[x,\infty),\,A \textnormal{ primitive}\}>0.
$$
\end{corollary}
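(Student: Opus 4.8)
The plan is to deduce Corollary \ref{cor:largex} from Theorem \ref{thm:Nk} by truncating each set $\N_k$ to its large members. Fix the constant $c>0$ provided by Theorem \ref{thm:Nk}, so that $f(\N_k)\ge c$ for all $k$. The idea is that the ``small'' part of $\N_k$ — elements $n\le x$ — contributes little to $f(\N_k)$ when $k$ is large relative to $x$, because an integer with $\Omega(n)=k$ and $n\le x$ must have $k\le \log x/\log 2$; hence if $k$ is chosen larger than $\log x/\log 2$, the set $\N_k$ lies \emph{entirely} in $[x,\infty)$, and it is already primitive (no element of $\N_k$ divides another, since division would change $\Omega$).

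\begin{proof}[Proof of Corollary \ref{cor:largex}]
Let $c>0$ be the constant from Theorem \ref{thm:Nk}, so that $f(\N_k)\ge c$ for every $k\ge1$. Fix $x\ge2$ and choose any integer $k>\log x/\log 2$. If $n\in\N_k$, then $n\ge 2^k>x$, so $\N_k\subset[x,\infty)$. Moreover $\N_k$ is primitive: if $a,b\in\N_k$ with $a\mid b$ and $a\neq b$, then $\Omega(b)>\Omega(a)$, contradicting $\Omega(a)=\Omega(b)=k$. Hence $\N_k$ is a primitive subset of $[x,\infty)$ with $f(\N_k)\ge c$, which gives
$$
\sup\{f(A):A\subset[x,\infty),\,A\text{ primitive}\}\ge c
$$
for every $x\ge2$. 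Letting $x\to\infty$ yields
$$
\limsup_{x\to\infty}\{f(A):A\subset[x,\infty),\,A\text{ primitive}\}\ge c>0,
$$
as claimed.
\end{proof}

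**There is essentially no obstacle here** — the corollary is an immediate packaging of Theorem \ref{thm:Nk}, and all the real work was done in proving that theorem via the Sathe--Selberg estimate. The only point requiring a moment's care is the observation that $\N_k$ is itself primitive (so that one need not pass to a subset), together with the trivial bound $n\ge 2^k$ for $n\in\N_k$, which forces $\N_k\subset[x,\infty)$ once $k$ exceeds $\log_2 x$. One could equivalently phrase the argument with $\N_k\cap[x,\infty)$ and note that its complement $\N_k\cap[1,x]$ is empty for $k$ large, but it is cleanest to just take $k$ large enough that the truncation is vacuous.
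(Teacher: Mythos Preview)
Your proof is correct and matches the paper's approach exactly: the paper does not even spell out an argument, merely noting that ``the sets $\N_k$ and Theorem \ref{thm:Nk} give us the following result,'' and your observation that $\N_k$ is primitive with $\N_k\subset[x,\infty)$ once $k>\log_2 x$ is precisely the intended justification.
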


%In view of the results in this paper, the principle remaining obstruction to the Erd\H os conjecture is to handle even primitive sets. It would be very interesting to prove $\sup_A f(A'_1)< 1/(2\log 2)$, especially since the prime $2$ is neither Mertens nor Zhang.

\section*{Acknowledgments}
We thank Greg Martin for  the content of Remark \ref{rmk:martin} and Kevin Ford for the content of
Remark \ref{rmk:ford}.
We thank Paul Kinlaw and Zhenxiang Zhang for some helpful comments.

\bibliographystyle{amsplain}

\begin{thebibliography}{10}

\bibitem{axler} C. Axler, \textit{New estimates for the $n$-th prime number},
arXiv:1706.03651v1 [math.NT].

\bibitem{bm} W. D. Banks and G. Martin, \textit{Optimal primitive sets with restricted primes},
Integers {\bf13} (2013), \#A69, 10 pp.

\bibitem{besicovitch} A. S. Besicovitch, \textit{On the density of certain sequences of integers}, Math. Ann. {\bf 110} (1934), 336--341.

\bibitem{BKK} J. Bayless, P. Kinlaw, and D. Klyve, \textit{Sums over primitive sets with a fixed number of prime factors}, preprint, 2018.

\bibitem{clark} D. A. Clark,  \textit{An upper bound of $ \sum 1/(a_i\log a_i)$ for primitive sequences}, Proc. Amer. Math. Soc. {\bf 123} (1995), 363--365.

\bibitem{cohen} H. Cohen, \textit{High precision computation of Hardy-Littlewood constants}, preprint \texttt{https://www.math.u-bordeaux.fr/$\sim$hecohen/}\ .

\bibitem{dusart} P. Dusart, \textit{Explicit estimates of some functions over primes}, Ramanujan J. {\bf 45} (2018), 227--251.


\bibitem{erdos35} P. Erd\H os, \textit{Note on sequences of integers no one of which is divisible by any other}, J. London Math. Soc. {\bf 10} (1935), 126--128.

\bibitem{erdos48} \bysame, \textit{On the integers having exactly $k$ prime factors},
Annals Math.\ {\bf49} (1948), 53--66.

\bibitem{ez} P. Erd\H os and Z. Zhang, \textit{Upper bound of $\sum 1/(a_i \log a_i)$ for primitive sequences}, Proc. Amer. Math. Soc.\ {\bf 117} (1993), 891--895.

\bibitem{DF} H. G. Diamond and K. Ford, \textit{Generalized Euler constants}, Math. Proc. Cambridge Phil. Soc.\ {\bf145} (2008), 27--41.

\bibitem{mits1} M. Kobayashi, \textit{On the density of abundant numbers}, PhD thesis, Dartmouth College, 2010.

\bibitem{lamz} Y. Lamzouri, \textit{A bias in Mertens' product formula}, Int. J. Number Theory {\bf 12} (2016), 97--109.

\bibitem{JDLpnd} J. D. Lichtman, \textit{The reciprocal sum of primitive nondeficient numbers}, J. Number Theory (2018), \texttt{https://doi.org/10.1016/j.jnt.2018.03.021}.

\bibitem {RS1} J. B. Rosser and L. Schoenfeld, \textit{Approximate formulas for some functions of prime numbers}, Illinois J. Math.\ {\bf6} (1962), 64--94.

\bibitem{MV} H. L. Montgomery and R. C. Vaughan, \textit{Multiplicative number theory I.
Classical
theory}, Cambridge U. Press, Cambridge, 2007.


\bibitem {RV} H. Riesel and R. C. Vaughan, \textit{On sums of primes}, Ark. Mat.\ {\bf21} (1983), 45--74.


\bibitem {RubSarn} M. Rubinstein and P. Sarnak, \textit{Chebyshev's bias}, Experiment. Math.\ {\bf3} (1994), 173--197.


\bibitem{zhang1} Z. Zhang, \textit{On a conjecture of Erd\H os on the sum $\sum_{p\le n} 1/(p \log p)$}, J. Number Theory {\bf 39} (1991), 14--17.

\bibitem{zhang2} \bysame, \textit{On a problem of Erd\H os concerning primitive sequences}, Math. Comp.\ {\bf 60} (1993), 827--834.

\end{thebibliography}

\end{document}